\newtheorem{prop}{Proposition}
\newtheorem{thm}[prop]{Theorem}
\newtheorem{cor}[prop]{Corollary}
\newtheorem{conj}[prop]{Conjecture}
\newtheorem{lem}[prop]{Lemma}
\theoremstyle{definition}
\newtheorem{que}[prop]{Question}
\newtheorem{defn}[prop]{Definition}
\newtheorem{rem}[prop]{\it Remark}
\numberwithin{equation}{section}
\newcommand{\bC}{\mathbb{C}}
\newcommand{\bR}{\mathbb{R}}
\newcommand{\bA}{\mathbb{A}}
\newcommand{\bQ}{\mathbb{Q}}
\newcommand{\bZ}{\mathbb{Z}}
\newcommand{\bN}{\mathbb{N}}
\newcommand{\bk}{\mathbbm{k}}
\newcommand{\bK}{\mathbb{K}}
\newcommand{\cX}{\mathcal{X}}
\newcommand{\cY}{\mathcal{Y}}
\newcommand{\cO}{\mathcal{O}}
\newcommand{\cL}{\mathcal{L}}
\newcommand{\cE}{\mathcal{E}}
\newcommand{\cD}{\mathcal{D}}
\newcommand{\cH}{\mathcal{H}}
\newcommand{\cS}{\mathcal{S}}
\newcommand{\cZ}{\mathcal{Z}}
\newcommand{\fa}{\mathfrak{a}}
\newcommand{\fb}{\mathfrak{b}}
\newcommand{\fc}{\mathfrak{c}}
\newcommand{\fm}{\mathfrak{m}}
\newcommand{\Spec}{\mathrm{Spec}}
\newcommand{\Supp}{\mathrm{Supp}}
\newcommand{\lct}{\mathrm{lct}}
\newcommand{\hvol}{\widehat{\mathrm{vol}}}
\newcommand{\ord}{\mathrm{ord}}
\newcommand{\Val}{\mathrm{Val}}
\newcommand{\e}{\mathrm{e}}
\newcommand{\vol}{\mathrm{vol}}
\newcommand{\depth}{\mathrm{depth}}
\newcommand{\rom}[1]{\lowercase\expandafter{\romannumeral #1\relax}}
\begin{document}

\title[semi-continuity problem of normalized
volumes of singularities]
{On the semi-continuity problem of
normalized volumes of singularities}
\author{Yuchen Liu}
\address{Department of Mathematics, Yale University, New Haven, CT 06511, USA.}
\email{yuchen.liu@yale.edu}

\date{\today}

\begin{abstract}
We show that in any $\bQ$-Gorenstein flat family
of klt singularities, normalized volumes
can only jump down at countably many subvarieties.
A quick consequence is that smooth points 
have the largest normalized volume among 
all klt singularities. Using an alternative
characterization of K-semistability developed
by Li, Xu and the author, we show that K-semistability
is a very generic or empty condition in any $\bQ$-Gorenstein
flat family of log Fano pairs.
\end{abstract}

\maketitle

\section{Introduction}

Given an $n$-dimensional complex klt singularity $(x\in (X,D))$, Chi Li \cite{li15a} 
introduced the \emph{normalized volume} function on the space $\Val_{x,X}$ of real valuations of $\bC(X)$ centered at $x$. 
More precisely, for any such valuation $v$, its normalized volume
is defined as $\hvol_{x,(X,D)}(v):=A_{(X,D)}(v)^n\vol(v)$, where $A_{(X,D)}(v)$
is the log discrepancy of $v$ with respect to $(X,D)$ according to \cite{jm12, bdffu15}, and 
$\vol(v)$ is the volume of $v$ according to \cite{els03}. 
Then we can define the \emph{normalized volume of a klt singularity}
$(x\in (X,D))$ by 
\[
 \hvol(x,X,D):=\min_{v\in\Val_{x,X}}\hvol_{x,(X,D)}(v)
\]
where the existence of minimizer of $\hvol$ was
shown recently by Blum \cite{blu16}. We also denote
$\hvol(x,X):=\hvol(x,X,0)$.

The normalized volume of a klt singularity
$x\in (X,D)$ carries some interesting information of its geometry 
and topology. It was shown by Xu and the author
that $\hvol(x,X,D)\leq n^n$ and equality holds
if and only if $(x\in X\setminus \Supp(D))$ is
smooth (see \cite[Theorem A.4]{liux17} or Theorem \ref{maxhvol}).
By \cite{xu14} the local algebraic fundamental group $\hat{\pi}_1^{\mathrm{loc}}(X,x)$
of a klt singularity $x\in X$ is always finite.
Moreover, assuming the conjectural finite degree
formula of normalized volumes \cite[Conjecture 4.1]{liux17}, then the size of $\hat{\pi}_1^{\mathrm{loc}}(X,x)$ is bounded from above by 
$n^n/\hvol(x,X)$ (see Remark \ref{r_localpi1}).
If $X$ is a Gromov-Hausdorff
limit of K\"ahler-Einstein Fano 
manifolds, then Li and Xu \cite{lx17} showed that $\hvol(x,X)=n^n\cdot\Theta(x,X)$ 
where $\Theta(x,X)$ is the volume density 
of a closed point $x\in X$ (see \cite{hs16, ss17} for 
background materials).
\medskip

In this article, we investigate the behavior of normalized volumes
of singularities under deformation. We first state the following
natural conjecture on constructibility and lower 
semi-continuity of normalized volumes of klt 
singularities (see also \cite[Conjecture 4.11]{xu17}).

\begin{conj}\label{mainconj}
Let $\pi:(\cX,\cD)\to T$ together with a section $\sigma: T\to \cX$ be a
 $\bQ$-Gorenstein flat family of complex klt singularities over a normal
 variety $T$. Then the function $t\mapsto\hvol(\sigma(t),\cX_{t},\cD_t)$
 on $T(\bC)$ is constructible and lower semi-continuous with respect to the Zariski topology.
\end{conj}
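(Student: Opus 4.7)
The plan is to establish constructibility and lower semi-continuity by Noetherian induction on $\dim T$. This reduces to two statements: (a) there exists a non-empty Zariski open $U \subseteq T$ on which $t \mapsto \hvol(\sigma(t),\cX_t,\cD_t)$ is constant, equal to its value $v_\eta$ at the generic point; and (b) for every $c \in \bR$, the sublevel set $\{t \in T : \hvol(\sigma(t),\cX_t,\cD_t) \leq c\}$ is Zariski closed. Given these, restricting to $T \setminus U$ and inducting on $\dim T$ yields the conjecture.

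For (a), take a minimizer $v_\eta \in \Val_{\sigma(\eta),\cX_\eta}$ (existence by Blum), and by approximation results (Li--Xu, Blum) reduce to the case $v_\eta = \ord_{S_\eta}$ for a Koll\'ar component $S_\eta$ over $\sigma(\eta) \in (\cX_\eta,\cD_\eta)$. I would spread $S_\eta$ out to a relative plt blowup $\mu : \cY \to \cX|_U$ over a dense open $U \subseteq T$ whose exceptional divisor $\cS$ restricts on each fiber to a Koll\'ar component $\cS_t$ over $\sigma(t)$, using openness of the plt condition in $\bQ$-Gorenstein families. Invariance of log discrepancies under restriction to fibers and deformation invariance of volumes of graded linear series (via a relative Okounkov body / Hilbert function argument) should then force $t \mapsto \hvol_{\sigma(t),(\cX_t,\cD_t)}(\ord_{\cS_t})$ to be constant on $U$ after possibly shrinking. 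This gives $\hvol(\sigma(t),\cX_t,\cD_t) \leq v_\eta$ generically, and the reverse inequality follows by spreading out a minimizer from a general closed point back to $\eta$.

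For (b), given a sequence $t_n \to t_0$ in $T$ with $\hvol(\sigma(t_n),\cX_{t_n},\cD_{t_n}) \leq c$, choose Koll\'ar components $S_n$ over $\sigma(t_n)$ realizing $\hvol$ to within $1/n$. One needs a uniform bound on the log discrepancies $A_{(\cX_{t_n},\cD_{t_n})}(\ord_{S_n})$, after which a compactness/limit argument should extract a quasi-monomial valuation $v_0$ on the central fiber satisfying $\hvol_{\sigma(t_0),(\cX_{t_0},\cD_{t_0})}(v_0) \leq c$. This is the relative analog of the compactness statement for minimizing sequences on a fixed klt singularity, itself a consequence of the Minimal Model Program together with boundedness of log canonical places.

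The principal obstacle is this compactness/limiting step in (b): even for a fixed klt singularity, control of minimizing valuations relies on deep MMP input (boundedness of complements, ACC-type statements), and in families one needs this control uniformly across the base, which is not presently available. A secondary difficulty in (a) is that the plt blowup extracting $S_\eta$ may fail to spread out without passing to a finite cover of $U$, since the blowup is a non-canonical datum; one must either argue directly with quasi-monomial valuations or carefully track fields of definition and specializations. These difficulties together presumably explain why the main theorem of the paper settles for jumps along a \emph{countable} union of subvarieties rather than the Zariski-constructible statement of Conjecture \ref{mainconj}: the countable version only requires the upper-bound half of (a), applied iteratively to Koll\'ar components indexed by a countable family, without needing the uniform compactness input that (b) demands.
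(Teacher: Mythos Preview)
The statement you are addressing is Conjecture~\ref{mainconj}, which the paper explicitly leaves \emph{open}; there is no proof in the paper to compare against. The paper establishes only the weaker Theorem~\ref{mainthm} (equivalently Theorem~\ref{weaksc}): normalized volume can drop only along a \emph{countable} union of proper subvarieties. So your proposal is not a proof attempt to be checked against an existing argument, but a strategy outline for an open problem, and you yourself flag the main gaps.

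Two remarks on the substance of your outline. First, in part (a) you write ``by approximation results (Li--Xu, Blum) reduce to the case $v_\eta=\ord_{S_\eta}$ for a Koll\'ar component.'' This reduction is not available: Blum's theorem gives a minimizing valuation, but it is not known in general to be divisorial, let alone a Koll\'ar component (this is part of the Stable Degeneration Conjecture discussed at the end of Section~\ref{sec_fieldext}). One can approximate the infimum by Koll\'ar components, but each such component spreads out over its own open set, and there is no a priori reason these opens have a common nonempty intersection---which is precisely why one lands on a countable intersection rather than a single Zariski open. Second, your diagnosis of (b) as the principal obstacle is reasonable, but note that even (a) in the form you state it (generic constancy on a genuine open) is already beyond what is proved.

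Finally, your last paragraph misdescribes the paper's route to Theorem~\ref{mainthm}. The paper does \emph{not} argue via spreading out Koll\'ar components. Instead it introduces the normalized colength $\widehat{\ell_{c,k}}$ (Section~\ref{hatl}), shows each $\widehat{\ell_{c,k}}$ is genuinely lower semi-continuous on $T$ via Hilbert schemes and semicontinuity of $\lct$ (Proposition~\ref{lctsemicont}), and then identifies $\hvol$ with the double limit $\lim_{m}\liminf_{k}\widehat{\ell_{1/m,k}}$ (Theorem~\ref{hvolcolength}). The countability enters because a $\liminf$ of lower semi-continuous functions need not be lower semi-continuous; one only gets the countable intersection $\bigcap_{k,m}U_{k,m}$. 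This is a rather different mechanism from the valuation-spreading picture you sketch.
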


Our first main result partially confirms Conjecture 
\ref{mainconj} by showing that normalized 
volumes of klt singularities satisfy weak 
lower semi-continuity in the sense that
they can only jump down at countably
many subvarieties.

\begin{thm}\label{mainthm}
Let $\pi:(\cX,\cD)\to T$ together with a section $\sigma: T\to \cX$ be a
 $\bQ$-Gorenstein flat family of complex klt singularities.
Then for any closed point $o\in T$, there exists an intersection
$U$ of countably many Zariski open neighborhoods of $o$, 
such that $\hvol(\sigma(t),\cX_t,\cD_t)\geq\hvol(\sigma(o),\cX_o,\cD_o)$
for any closed point $t\in U$.
\end{thm}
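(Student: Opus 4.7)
The plan is to combine the infimum characterization of normalized volume in terms of $\fm$-primary ideals, $\hvol(x,X,D)=\inf_{\fa}n!\cdot\lct(X,D;\fa)^n\cdot\e(\fa)$ for $\fa$ running over $\fm_x$-primary ideals on the klt singularity, with properness of the relative punctual Hilbert scheme and constructibility of $\lct$ and $\e$ in flat families. For each $k\ge 1$, the aim is to produce a Zariski open $U_k\subset T$ containing $o$ on which
\[
\hvol(\sigma(t),\cX_t,\cD_t)\ge\hvol(\sigma(o),\cX_o,\cD_o)-\tfrac{1}{k};
\]
then $U:=\bigcap_{k\ge 1}U_k$ will satisfy the conclusion of the theorem.

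To build $U_k$, for each closed point $t$ near $o$ I pick an $\fm_{\sigma(t)}$-primary ideal $\fa_t$ on $\cX_t$ witnessing the above infimum to within $1/k$, of some colength $\ell$. I then choose a smooth curve $C\subset T$ passing through both $o$ and $t$. The relative punctual Hilbert scheme $\mathrm{Hilb}^{\ell}_{\sigma}(\cX/T)$, parametrizing $\fm$-primary ideals of colength $\ell$ on fibers of $\cX/T$ supported along the section $\sigma$, is projective over $T$; so the point of its fiber over $t$ corresponding to $\fa_t$ extends, by the valuative criterion of properness (possibly after a finite base change of $C$), to a section over $C$. Restricting this section to $\sigma(o)$ yields an $\fm_{\sigma(o)}$-primary ideal $\fa_o$ on $\cX_o$, together with a flat family of ideals over $C$ interpolating $\fa_o$ and $\fa_t$. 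Lower semi-continuity of $\lct$ in flat families, combined with constructibility of the Hilbert--Samuel multiplicity under specialization, should then furnish the key comparison
\[
n!\cdot\lct(\cX_o,\cD_o;\fa_o)^n\cdot\e(\fa_o)\le n!\cdot\lct(\cX_t,\cD_t;\fa_t)^n\cdot\e(\fa_t),
\]
which via the infimum characterization implies $\hvol(\sigma(o),\cX_o,\cD_o)\le\hvol(\sigma(t),\cX_t,\cD_t)+1/k$.

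The main obstacle lies in securing the comparison above. In a flat family of $\fm$-primary ideals over $C$, the log canonical threshold is lower semi-continuous while the Hilbert--Samuel multiplicity is upper semi-continuous, so the weighted product $\lct^n\cdot\e$ is not a priori monotone in the base coordinate. Overcoming this likely requires either restricting the approximate minimizers to a class of ideals whose flat extensions preserve multiplicity (reducing the problem to lower semi-continuity of $\lct$ alone), or invoking a sharper relative form of the underlying inequality that controls $\lct^n\cdot\e$ as a single bundled invariant. The necessity of taking a countable intersection of Zariski opens, rather than a single one, arises because one must iterate over all $k\in\bN$: each level $k$ may exclude its own proper Zariski-closed locus in $T$, depending on the chosen test ideals, and these exclusions cannot in general be consolidated into a common Zariski open.
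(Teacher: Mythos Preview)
You have correctly identified the central difficulty, but your proposal does not resolve it, so it is not yet a proof. As you yourself note, in a flat family of $\fm$-primary ideals the log canonical threshold is lower semi-continuous while the Hilbert--Samuel multiplicity is upper semi-continuous; the product $\lct^n\cdot\e$ therefore has no a priori monotonicity, and your ``key comparison'' $\lct(\fa_o)^n\e(\fa_o)\le\lct(\fa_t)^n\e(\fa_t)$ simply need not hold for the specialization $\fa_o$ of $\fa_t$. Your two suggested fixes are left as hopes rather than arguments: you do not exhibit a class of approximate minimizers whose flat limits preserve multiplicity, nor do you produce any bundled semi-continuity for $\lct^n\cdot\e$. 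There is also a structural problem with the outline: for fixed $k$ you pick an ideal $\fa_t$ depending on $t$ and then a curve through $o$ and $t$; this point-by-point construction does not produce a single Zariski open neighborhood $U_k$ of $o$ on which the desired inequality holds.

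The paper closes exactly this gap by replacing $\e(\fa)$ with the colength $\ell(R/\fa)$, which is \emph{locally constant} on the relative Hilbert scheme. Concretely, it introduces the normalized colength
\[
\widehat{\ell_{c,k}}(x,X,D):=n!\cdot\inf_{\fm^k\subset\fa\subset\fm,\ \ell(R/\fa)\ge ck^n}\lct(\fa)^n\cdot\ell(R/\fa),
\]
and observes that, for each fixed $(c,k)$, lower semi-continuity of $t\mapsto\widehat{\ell_{c,k}}(\sigma(\bar t),\cX_{\bar t},\cD_{\bar t})$ follows immediately from lower semi-continuity of $\lct$ on the proper relative Hilbert scheme $\mathrm{Hilb}_d(\cZ_k/T)$, with no competing upper semi-continuity to worry about. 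The substantive work is then moved to a single-fiber statement, Theorem~\ref{hvolcolength}: for $c$ small one has $\liminf_k\widehat{\ell_{c,k}}=\hvol$. This is proved via local Newton--Okounkov bodies (Lemma~\ref{colengthmult}), which gives a uniform estimate $n!\,\ell(R/\fa)\ge(1-\epsilon)\e(\fa)$ for all $\fa$ with $\fm^k\subset\fa\subset\fm^{\lceil\delta k\rceil}$ and $k\gg0$. Intersecting the Zariski opens over all $(k,m)$ then yields the theorem. In short, the missing idea in your proposal is precisely this replacement of multiplicity by colength together with the asymptotic comparison Theorem~\ref{hvolcolength}.
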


A quick consequence of Theorem \ref{mainthm} is that smooth points
have the largest normalized volumes among all klt singularities
(see Theorem \ref{maxhvol} or \cite[Theorem A.4]{liux17}).
\medskip

To verify the Zariski openness of K-semistability is 
an important step in the construction of algebraic moduli space
of K-polystable $\bQ$-Fano varieties. In a smooth family of Fano manifolds,
Odaka \cite{oda13} and Donaldson \cite{don15} showed that the locus of fibers admitting K\"ahler-Einstein metrics (or equivalently, being K-polystable) with
discrete automorphism groups is Zariski open.  This was
generalized by Li, Wang and Xu \cite{lwx14} where they proved the
Zariski openness of K-semistability in a $\bQ$-Gorenstein
flat families of smoothable $\bQ$-Fano varieties
in their construction of the proper moduli space of smoothable
K-polystable $\bQ$-Fano varieties (see \cite{ssy16, oda15} for
related results).
A common feature is that analytic methods were used essentially in proving these results.

Using the alternative characterization of K-semistability
by the affine cone construction 
developed by Li, Xu and the author in \cite{li15b, ll16, lx16}, 
we prove the following result on weak openness
of K-semistability as an application of Theorem \ref{mainthm}. Unlike the results described in the previous paragraph, our result is proved using purely algebraic method and hence can be applied to $\bQ$-Fano families with non-smoothable fibers (or more generally, families of log Fano pairs).

\begin{thm}\label{openk}
Let $\varphi:(\cY,\cE)\to T$ be a $\bQ$-Gorenstein flat 
family of complex log Fano pairs over a normal base $T$.
Assume that $(\cY_o,\cE_o)$ is log K-semistable 
for some closed point $o\in T$. Then
\begin{enumerate}
 \item There exists an intersection $U$ of countably many Zariski
 open neighborhoods of $o$, such that $(\cY_t,\cE_t)$ is
 log K-semistable for any closed point $t\in T$.
 In particular, $(\cY_t,\cE_t)$ is log K-semistable for a very general 
 closed point $t\in T$.
 \item Denote by $\eta$ the generic point of $T$, then
 the geometric generic fiber $(\cY_{\bar{\eta}},\cE_{\bar{\eta}})$
 is log K-semistable.
 \item Assume Conjecture \ref{mainconj} is true, then such $U$
 from (1) can be chosen as a genuine Zariski open neighborhood of $o$.
\end{enumerate}
\end{thm}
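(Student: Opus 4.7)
The plan is to reduce log K-semistability to a statement about normalized volumes of cone singularities via the characterization from \cite{li15b, ll16, lx16}, and then feed this into Theorem \ref{mainthm}.

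First I fix $r\in\bZ_{>0}$ so that $L:=-r(K_{\cY/T}+\cE)$ is a $\varphi$-ample Cartier divisor, and form the relative affine cone
\[
\varphi_\cC : (\cX,\cE_\cX) := \Spec_T \bigoplus_{k\ge 0} \varphi_*\cO_\cY(kL) \to T
\]
together with the section $\sigma:T\to\cX$ picking out the vertex. Each fiber $(\cX_t,\cE_{\cX,t})$ is a klt cone singularity and $\varphi_\cC$ is a $\bQ$-Gorenstein flat family in the sense required by Theorem \ref{mainthm}. The characterization cited above says that $(\cY_t,\cE_t)$ is log K-semistable iff the canonical divisorial valuation $\ord_{\cY_t}$ minimizes the normalized volume at the vertex of $\cX_t$. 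Moreover the quantity
\[
V := \hvol_{(\cX_t,\cE_{\cX,t})}(\ord_{\cY_t})
\]
depends only on $r$ and the intersection number $(-K_{\cY_t}-\cE_t)^{\dim\cY_t}$, which is constant in a $\bQ$-Gorenstein flat family of log Fano pairs, so $V$ is independent of $t$. In particular we always have $\hvol(\sigma(t),\cX_t,\cE_{\cX,t})\le V$, with equality iff $(\cY_t,\cE_t)$ is log K-semistable.

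Given this setup, part (1) drops out: applying Theorem \ref{mainthm} to $\varphi_\cC$ at $o$, we obtain a countable intersection $U$ of Zariski open neighborhoods of $o$ such that for closed $t\in U$,
\[
V = \hvol(\sigma(o),\cX_o,\cE_{\cX,o}) \le \hvol(\sigma(t),\cX_t,\cE_{\cX,t}) \le V,
\]
forcing equality and hence log K-semistability of $(\cY_t,\cE_t)$. Part (3) is similarly short under Conjecture \ref{mainconj}: the function $t\mapsto \hvol(\sigma(t),\cX_t,\cE_{\cX,t})$ is then constructible and lower semi-continuous with maximum value $V$ attained at $o$, and these two properties together force $\{t:\hvol=V\}$ to contain a Zariski open neighborhood of $o$.

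The main obstacle is part (2). My plan there is a spreading-out argument: if the geometric generic fiber were not log K-semistable, then by the valuative criterion (Fujita--Li) there would exist a prime divisor $F$ over $\cY_{\bar\eta}$ with $\beta$-invariant strictly negative, defined already over a finite Galois extension $K/\bC(T)$. Normalizing $T$ in $K$ to obtain a finite surjective $T'\to T$ and spreading $F$ out to a divisor over $\cY\times_T T'$ defined on a non-empty Zariski open $V'\subset T'$, constructibility and lower semi-continuity of $\beta$ in families would shrink $V'$ so that every closed fiber over $V'$ is K-unstable; pushing forward to $T$, every closed fiber of $\cY\to T$ over a Zariski dense open $W\subset T$ would be K-unstable. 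Intersecting $W$ with the $U$ furnished by part (1) produces a closed point that is simultaneously K-semistable and K-unstable, yielding the contradiction. The delicate point will be to justify the spreading out of $F$ together with the semi-continuity of the $\beta$-invariant in this generality; modulo those, everything falls into place.
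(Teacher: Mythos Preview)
Your treatment of parts (1) and (3) is correct and matches the paper's argument: form the relative affine cone, invoke Theorem \ref{ksscone} to translate K-semistability into $\hvol(\sigma(t),\cX_t,\cD_t)=V$, and then use the (weak) lower semi-continuity of $\hvol$ together with the a priori upper bound $\hvol\le V$.

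For part (2) you take a genuinely different route. The paper does not use a spreading-out argument with the $\beta$-invariant at all. Instead it proves a scheme-theoretic strengthening of Theorem \ref{mainthm} (stated as Theorem \ref{weaksc}) valid for \emph{arbitrary} points of $T$ and comparing geometric fibers: if $t$ is a generalization of $o$, then $\hvol(\sigma(\bar t),\cX_{\bar t},\cD_{\bar t})\ge \hvol(\sigma(\bar o),\cX_{\bar o},\cD_{\bar o})$. Since the generic point $\eta$ is a generalization of $o$, this immediately gives $\hvol(\sigma(\bar\eta),\cX_{\bar\eta},\cD_{\bar\eta})\ge V$, and the reverse inequality from Theorem \ref{ksscone} forces equality, hence log K-semistability of $(\cY_{\bar\eta},\cE_{\bar\eta})$. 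The only extra ingredient beyond Theorem \ref{mainthm} is the invariance of $\hvol$ under algebraically closed field extensions (Proposition \ref{fieldext}), which the paper establishes via the normalized-colength description.

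Your $\beta$-invariant route is plausible and would likely go through, but it imports machinery (the Fujita--Li valuative criterion and the behavior of $\beta$ for a spread-out divisor in families) that lies outside what the paper develops, and you yourself flag the spreading/semi-continuity step as delicate. The paper's approach is both shorter and self-contained: once you realize that the proof of Theorem \ref{mainthm} goes through verbatim for scheme-theoretic points and geometric fibers, part (2) is a one-line consequence of stability under generalization, with no need to touch test configurations or divisorial $\beta$-invariants.
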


The following corollary follows easily from Theorem \ref{openk}.
\begin{cor}\label{specialdeg}
 Suppose a complex log Fano pair $(Y,E)$ specially degenerates to a log K-semistable
 log Fano pair $(Y_0,E_0)$, then $(Y,E)$ is also log K-semistable.
\end{cor}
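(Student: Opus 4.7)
My plan is to apply Theorem \ref{openk} directly to the family underlying the special degeneration. By definition, a special degeneration of a log Fano pair $(Y,E)$ to $(Y_0,E_0)$ is a $\bQ$-Gorenstein flat family $\varphi:(\cY,\cE)\to\bA^1$ of log Fano pairs over the normal base $\bA^1$, equipped with a $\bG_m$-action compatible with the standard one on $\bA^1$, such that $(\cY_0,\cE_0)\cong (Y_0,E_0)$ and $(\cY_t,\cE_t)\cong (Y,E)$ for every closed point $t\neq 0$. This is precisely the setting in which Theorem \ref{openk} is formulated.

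The first step is to invoke Theorem \ref{openk}(1) with $o=0$. Since $(\cY_0,\cE_0)=(Y_0,E_0)$ is log K-semistable by hypothesis, we obtain a subset $U\subset \bA^1$ equal to an intersection of countably many Zariski open neighborhoods of $0$ such that $(\cY_t,\cE_t)$ is log K-semistable for every closed point $t\in U$. The second step is to observe that each Zariski open subset of $\bA^1$ has finite complement, so the complement of $U$ in $\bA^1(\bC)$ is a countable union of finite sets, hence countable, whereas $\bA^1(\bC)$ itself is uncountable. It follows that there exists a closed point $t_0\in U$ with $t_0\neq 0$. By the $\bG_m$-equivariance of the family, the fiber $(\cY_{t_0},\cE_{t_0})$ is isomorphic to $(Y,E)$, and it is log K-semistable by the output of Theorem \ref{openk}(1); therefore $(Y,E)$ itself is log K-semistable.

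I do not expect any substantial obstacle. The essential content of the corollary is already packaged inside Theorem \ref{openk}(1); the only additional input is the elementary observation that on a one-parameter base a countable intersection of Zariski open neighborhoods of $0$ remains uncountable and must meet the open orbit $\bA^1\setminus\{0\}$, which is exactly the locus of fibers isomorphic to $(Y,E)$. As an alternative one could instead apply Theorem \ref{openk}(2), concluding that the geometric generic fiber $(\cY_{\bar\eta},\cE_{\bar\eta})$ is log K-semistable and then descending this property along the algebraically closed field extension $\overline{\bC(t)}/\bC$ back to $(Y,E)$; I prefer the route through part (1) because it bypasses any need to verify descent of log K-semistability under field extensions.
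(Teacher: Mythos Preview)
Your proposal is correct and follows essentially the same route as the paper: the paper deduces Corollary~\ref{specialdeg} from a slightly more general statement about families that are isotrivial over an open subset of the base, whose proof is exactly your argument---apply Theorem~\ref{openk}(1) (equivalently Theorem~\ref{openkgen}) at the special point, use that very general fibers are then log K-semistable, and conclude by isotriviality that all generic fibers are. Your explicit countability argument on $\bA^1(\bC)$ is just the concrete instance of ``very general closed points exist'' used in the paper.
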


Our strategy to prove Theorem \ref{mainthm} is to study various invariants
of ideals instead of valuations. From the author's characterization
of normalized volume by normalized multiplicities of ideals (see
\cite[Theorem 27]{liu16} or Theorem \ref{eqhvol}), we know
that $\hvol(\sigma(t),\cX_t,\cD_t)=\inf_{\fa}\lct(\cX_t,\cD_t;\fa)^n\cdot\e(\fa)$
where the infimum is taken over all ideals $\fa$ cosupported at
$\sigma(t)$. These ideals are parametrized by some relative 
Hilbert scheme of $\cX/T$ with countably many components. Clearly $\fa\mapsto\lct(\cX_t,\cD_t;\fa)$ is lower semi-continuous
on the Hilbert scheme,  but $\fa\mapsto\e(\fa)$ is usually upper semi-continuous
which makes the desired lower semi-continuity of $\lct(\fa)^n\cdot\e(\fa)$ obscure.
To fix this issue, we introduce the \emph{normalized colength of singularities}
by taking the infimum of $\lct(\cX_t,\cD_t;\fa)^n\cdot\ell(\cO_{\sigma(t),\cX_t}/\fa)$
for certain classes of ideals $\fa$. The normalized colength function
behaves better in families since the colength function $\fa\mapsto\ell(\cO_{\sigma(t),\cX_t}/\fa)$
is always locally constant in the Hilbert scheme, so  $\fa\mapsto\lct(\cX_t,\cD_t;\fa)^n\cdot\ell(\cO_{\sigma(t),\cX_t}/\fa)$
is lower semi-continuous on the Hilbert scheme. Then we prove
a key equality between the asymptotic normalized colength 
and the normalized volume (see Theorem \ref{hvolcolength}) using
local Newton-Okounkov bodies following \cite{cut13, kk14} (see
Lemma \ref{colengthmult}) and 
convex geometry (see Appendix \ref{app}). Putting these ingredients
together, we get the proof of Theorem \ref{mainthm}.
\medskip

This paper is organized as follows. In Section \ref{prelim}, we give
the preliminaries including notations, normalized volumes of singularities, and $\bQ$-Gorenstein flat
families of klt pairs. In Section \ref{hatl},
we introduce the concept of normalized colengths of singularities.
We show in Theorem \ref{hvolcolength} that the normalized volume
of a klt singularity is the same as its asymptotic normalized colength.
The proof of Theorem \ref{hvolcolength} uses a comparison
of colengths and multiplicities established in Lemma \ref{colengthmult}. In Section \ref{sec_fieldext},
we study the normalized volumes and normalized colength after
algebraically closed field extensions. The proofs of main theorems
are presented in Section \ref{sec_proofs}. We give applications
of our main theorems in Section \ref{sec_appl}. Theorem \ref{maxhvol}
generalizes the inequality part of \cite[Theorem A.4]{liux17}.
We give an effective upper bound on the degree of finite quasi-\'etale maps 
over klt singularities on Gromov-Hausdorff limits of K\"ahler-Einstein
Fano manifolds (see Theorem \ref{ghlimit}). In Appendix \ref{app}
we provide certain convex geometric results on lattice points counting that are needed
in proving Lemma \ref{colengthmult}.

\subsection*{Acknowledgements} I would like to thank 
 Harold Blum, Chi Li and Chenyang Xu for fruitful discussions. I wish to
thank J\'anos Koll\'ar, Linquan Ma, Mircea Musta\c{t}\u{a}, Sam Payne,
Xiaowei Wang, and Ziquan Zhuang for helpful comments. I am also grateful to Ruixiang Zhang for his help
on Proposition \ref{convexgeo}.

\section{Preliminaries}\label{prelim}
\subsection{Notations}
In the rest of this paper, all varieties are assumed to be irreducible, reduced, and defined over a (not necessarily 
algebraically closed) field $\bk$ of characteristic $0$.
For a variety $T$ over $\bk$, we denote the residue field of
any scheme-theoretic point $t\in T$ by $\kappa(t)$.
Let $\pi:\cX\to T$ between varieties over $\bk$, we denote by $\cX_t:=\cX\times_{T}\Spec(\kappa(t))$ the scheme theoretic fiber over $t\in T$. We also denote
the geometric fiber of $\pi$ over $t\in T$ by $\cX_{\bar{t}}:=\cX\times_{T}\Spec(\overline{\kappa(t)})$.
Suppose $X$ is a variety over $\bk$ and $x\in X$ is a $\bk$-rational point. Then for any
field extension $\bK/\bk$, we denote $(x_{\bK}, X_{\bK}):=(x,X)\times_{\Spec(\bk)}\Spec(\bK)$.

Let $X$ be a normal variety over $\bk$. Let $D$ be an effective $\bQ$-divisor on $X$. We say that $(X,D)$ is a \emph{Kawamata log terminal (klt) pair} if $(K_X+D)$ is $\bQ$-Cartier and  $\ord_E(K_Y-f^*(K_X+D))>-1$ (equivalently, $A_{(X,D)}(\ord_E)>0$) for any prime divisor $E$ on some log resolution $f:Y\to (X,D)$. A klt pair $(X,D)$ is called \emph{a log Fano pair} if in addition $X$ is proper and $-(K_X+D)$ is ample. A klt pair $(X,D)$ together with a closed point $x\in X$ is called a \emph{klt singularity} $(x\in (X,D))$.

Let $(X,D)$ be a klt pair. For an ideal sheaf $\fa$ on $X$, we define
the \emph{log canonical threshold of $\fa$ with respect to $(X,D)$} by 
\[
\lct(X,D;\fa):=\inf_E\frac{1+\ord_E(K_Y-f^*(K_X+D))}{\ord_E(\fa)},
\]
where the infimum is taken over all prime divisors $E$ on a log resolution $f:Y\to (X,D)$. We will also use the notation $\lct(\fa)$ as abbreviation of $\lct(X,D;\fa)$ once the klt pair $(X,D)$ is specified.
If $\fa$ is co-supported at a single closed point $x\in X$, we define the \emph{Hilbert-Samuel multiplicity} of $\fa$ as
\[
\e(\fa):=\lim_{m\to\infty}\frac{\ell(\cO_{x,X}/\fa^m)}{m^n/n!}
\]
where $n:=\dim(X)$ and $\ell(\cdot)$ is the length of an Artinian ring.

\subsection{Normalized volumes of singularities}
 Let $\bk$ be an algebraically closed field of characteristic $0$. For
 an $n$-dimensional klt singularity $x\in (X,D)$ over $\bk$, C. Li \cite{li15a} introduced the normalized volume function $\hvol_{x,(X,D)}:\Val_{X,x}\to \bR_{>0}\cup\{+\infty\}$ where $\Val_{X,x}$ is the space of all real valuations of $\bk(X)$ centered at $x$. Then we can define the normalized volume of the singularity
 $x\in (X,D)$ to be
 \[
 \hvol(x,X,D):=\inf_{v\in\Val_{X,x}}\hvol_{x,(X,D)}(v).
 \]
 If in addition $\bk$ is uncountable, then Blum \cite{blu16}
 proved the existence of a $\hvol$-minimizng valuation. The following characterization of normalized volumes using log canonical thresholds and multiplicities of ideals is crucial in our study. Note that the right hand side of \eqref{lcte} was studied by de Fernex, Ein and Musta\c{t}\u{a} \cite{dfem04}
 when $x\in X$ is smooth and $D=0$.
 
 \begin{thm}[{\cite[Theorem 27]{liu16}}]\label{eqhvol}
 With the above notation, we have
 \begin{equation}\label{lcte}
 \hvol(x, X, D)=\inf_{\fa\colon \fm_{x}\textrm{-primary}}\lct(X,D;\fa)^n\cdot\e(\fa).
 \end{equation}
 
 \end{thm}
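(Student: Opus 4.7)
I would prove the two inequalities separately, writing $L(\fa) := \lct(X,D;\fa)^n \cdot \e(\fa)$ for brevity. The inequality $\hvol(x,X,D) \leq \inf_\fa L(\fa)$ is the easier direction. Given an $\fm_x$-primary ideal $\fa$, choose a log resolution $f\colon Y \to (X,D)$ such that $\fa\cdot\cO_Y = \cO_Y(-F)$ is invertible with simple normal crossings support together with the exceptional locus and the strict transform of $D$. Standard log canonical threshold theory then produces a prime divisor $E$ on $Y$ for which $v := \ord_E$ computes the log canonical threshold, so $A_{(X,D)}(v) = \lct(\fa)\cdot v(\fa)$. Combining this with the elementary Izumi-type bound $\vol(v)\cdot v(\fa)^n \leq \e(\fa)$, which follows from the inclusion $\fa^k \subseteq \fa_{kv(\fa)}(v)$ by passing to colengths and taking limits, yields $A_{(X,D)}(v)^n\vol(v) \leq \lct(\fa)^n\e(\fa) = L(\fa)$, hence $\hvol(x,X,D) \leq L(\fa)$.

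For the reverse direction, fix a valuation $v \in \Val_{X,x}$; one may assume $A_{(X,D)}(v) < \infty$, in which case a standard Izumi-type inequality on the klt singularity ensures $v(\fm_x) > 0$, so the valuation ideals $\fa_m := \{f\in \cO_{x,X} : v(f) \geq m\}$ are all $\fm_x$-primary. From $v(\fa_m) \geq m$ and the basic inequality $\lct(\fa) \leq A_{(X,D)}(v)/v(\fa)$, one obtains $\lct(\fa_m) \leq A_{(X,D)}(v)/m$. Therefore
\[
L(\fa_m) \;\leq\; A_{(X,D)}(v)^n \cdot \frac{\e(\fa_m)}{m^n}.
\]
The key asymptotic multiplicity formula $\lim_{m\to\infty} \e(\fa_m)/m^n = \vol(v)$ (Ein--Lazarsfeld--Smith in the smooth case, extended by Cutkosky to valuations on normal singularities) then gives $\liminf_m L(\fa_m) \leq A_{(X,D)}(v)^n\vol(v) = \hvol_{x,(X,D)}(v)$, and taking infimum over $v \in \Val_{X,x}$ finishes the proof.

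\textbf{Main obstacle.} The two main technical inputs are the Izumi-type estimates used to compare valuations with ideals in both directions, and the asymptotic formula $\e(\fa_m(v))/m^n \to \vol(v)$, whose proof for valuations on singular varieties is genuinely nontrivial and relies on Okounkov-body-style constructions. A subtler point is that $\Val_{X,x}$ contains non-divisorial real valuations; their valuation ideals need not be cut out on any single log resolution, which is precisely why the asymptotic multiplicity formula (rather than a one-shot divisorial argument) is required in the second half. The finiteness of $A_{(X,D)}(v)$ reduction, and the preliminary verification that $v(\fm_x) > 0$ on klt singularities, are additional technical points that make the second inequality considerably more delicate than the first.
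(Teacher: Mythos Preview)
The paper does not give its own proof of this statement: it is quoted verbatim as \cite[Theorem~27]{liu16} and used as a black box. So there is no ``paper's proof'' to compare against here.

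That said, your outline is correct and is essentially the argument in \cite{liu16}. Both directions you sketch appear (implicitly) in the present paper as well: the existence of a divisorial valuation computing $\lct(\fa)$ is invoked as \cite[Lemma~26]{liu16} in the proof of Theorem~\ref{hvolcolength}, and the inequality $\lct(\fa_\bullet(v))\leq A_{(X,D)}(v)$ that drives your second direction is exactly what the paper uses in the ``$\leq$'' part of that same proof. Your identification of the key analytic input, namely $\e(\fa_m(v))/m^n\to\vol(v)$ via \cite{els03, lm09, cut13}, is also correct and is the genuinely nontrivial step.
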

 
 The following theorem provides an alternative characterization
 of K-semistability using the affine cone construction. Here we
 state the most general form, and special cases can be found
 in \cite{li15b, ll16}.
 
\begin{thm}[{\cite[Proposition 4.6]{lx16}}]\label{ksscone}
Let $(Y,E)$ be a log Fano pair of dimension $(n-1)$ over an algebraically closed field $\bk$ of characteristic $0$. For $r\in\bN$ satisfying
$L:=-r(K_Y+E)$ is Cartier, the affine cone $X=C(Y,L)$ is
defined by $X:=\Spec\oplus_{m\geq 0}H^0(Y,L^{\otimes m})$. Let $D$ be the $\bQ$-divisor on $X$ corresponding to $E$. Denote by $x$ the cone vertex of $X$. Then 
\[
\hvol(x,X,D)\leq r^{-1}(-K_Y-E)^{n-1},
\]
and the equality holds if and only if $(Y,E)$ is log K-semistable.
\end{thm}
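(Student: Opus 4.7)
The plan is to obtain the inequality by exhibiting one distinguished valuation that achieves the stated bound, and then to derive the equality characterization by reformulating the infimum from Theorem~\ref{eqhvol} in terms of $\bG_m$-equivariant data on the cone, which can be translated into familiar K-stability invariants of $(Y,E)$.

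\emph{Step 1 (inequality via the canonical valuation).} Consider the canonical toric valuation $v_0$ on $X=\Spec R$ with $R=\bigoplus_{m\ge 0} H^0(Y,L^{\otimes m})$, defined by $v_0(f)=m$ for $0\neq f\in R_m$. A standard adjunction computation on the blowup $\mathrm{Bl}_x X\to X$, whose exceptional divisor is a copy of $Y$, combined with $L=-r(K_Y+E)$, yields $A_{(X,D)}(v_0)=1/r$. On the other hand, $R/\fa_m(v_0)=\bigoplus_{k<m} H^0(Y,L^{\otimes k})$, so by Riemann--Roch $\ell(R/\fa_m(v_0))\sim \tfrac{L^{n-1}}{n!}m^n$, giving $\vol(v_0)=L^{n-1}=r^{n-1}(-K_Y-E)^{n-1}$. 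Therefore $\hvol(x,X,D)\leq \hvol_{x,(X,D)}(v_0)=A_{(X,D)}(v_0)^n\,\vol(v_0)=r^{-1}(-K_Y-E)^{n-1}$.

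\emph{Step 2 (reduction to graded ideals).} By Theorem~\ref{eqhvol}, the equality case amounts to $\inf_{\fa}\lct(X,D;\fa)^n\cdot\e(\fa)=r^{-1}(-K_Y-E)^{n-1}$, with the infimum over $\fm_x$-primary ideals. The $\bG_m$-action on $(X,D)$ fixing $x$ allows us to replace any such $\fa$ by its Rees degeneration $\bin(\fa)$, a $\bG_m$-invariant (hence graded) ideal with the same colength and with $\lct$ no larger, by lower semi-continuity of $\lct$ in families. Combined with the comparison between multiplicity and colength along the lines of Lemma~\ref{colengthmult}, the infimum may therefore be restricted to graded $\fm_x$-primary ideals. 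Such an ideal is precisely the data of a multiplicative filtration $\fa_m\subseteq H^0(Y,L^{\otimes m})$.

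\emph{Step 3 (translation to $(Y,E)$, and the main obstacle).} The core of the argument is to convert, for each such multiplicative filtration, the quantity $\lct(X,D;\fa)^n\cdot\e(\fa)$ into the corresponding K-stability invariant of $(Y,E)$. One uses local Newton--Okounkov bodies to express $\e(\fa)$ as a volume integral over an Okounkov-type body attached to the filtration, and a toric/equivariant log resolution computation to identify $\lct(X,D;\fa)$ with $r^{-1}$ times the log canonical slope of the induced graded linear series on $Y$. A direct calculation then shows that the product $\lct(X,D;\fa)^n\cdot\e(\fa)$ equals $r^{-1}(-K_Y-E)^{n-1}$ plus a nonnegative correction precisely when the generalized Futaki (equivalently, $\beta$-) invariant of the associated test configuration of $(Y,E;L)$ is nonnegative. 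Since every test configuration arises in this way through its associated graded algebra, the infimum equals $r^{-1}(-K_Y-E)^{n-1}$ if and only if $(Y,E)$ is log K-semistable, proving the equivalence. The main obstacle is Step~3: the passage between the normalized multiplicities of graded ideals on the cone and the K-stability invariants on the base requires the full Newton--Okounkov formalism together with the valuative/filtration criterion for K-semistability, and ensuring that every filtration (not only those coming from test configurations) gives a nonnegative contribution is the most technical point.
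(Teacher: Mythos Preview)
The paper does not give its own proof of this statement: it is quoted as \cite[Proposition 4.6]{lx16} (with special cases in \cite{li15b, ll16}) and used as a black box. There is thus no in-paper argument to compare your proposal against.

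On the merits of your sketch: Step~1 is the standard computation and is correct. In Step~2 there is a slip. Degenerating a single ideal $\fa$ to $\bin(\fa)$ preserves $\ell(R/\fa)$ and can only decrease $\lct$, but multiplicity is \emph{upper} semi-continuous under this degeneration (since $\bin(\fa)^k\subset\bin(\fa^k)$, so $\ell(R/\bin(\fa)^k)\geq\ell(R/\fa^k)$); hence $\lct^n\cdot\e$ need not decrease, and Lemma~\ref{colengthmult} does not close this gap, as it only gives the one-sided bound $n!\,\ell\geq(1-\epsilon)\e$. The clean fix is to run the reduction at the level of graded sequences: for $\fa_k:=\fa^k$ one has $\ell(R/\bin(\fa_k))=\ell(R/\fa_k)$ for all $k$, hence $\e(\bin(\fa_\bullet))=\e(\fa)$, while $\lct(\bin(\fa_\bullet))\leq\lct(\fa)$, and each $\bin(\fa_k)$ is homogeneous. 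With this correction your Step~3 is the right picture, but as you acknowledge, the identification of $\lct^n\cdot\e$ for homogeneous data with the Ding/Futaki-type invariant of the associated filtration of the section ring, together with the passage from test-configuration filtrations to arbitrary ones, is precisely the substantive content of \cite{li15b, ll16, lx16} and is not reproved in the present paper.
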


\subsection{$\bQ$-Gorenstein flat families of klt pairs}

In this section, the field $\bk$ is not assumed to be algebraically closed.
\begin{defn}\begin{enumerate}[label=(\alph*)]
\item Given a normal variety $T$, a \emph{$\bQ$-Gorenstein flat family of klt pairs over} $T$ consists of
a surjective flat morphism $\pi:\cX\to T$ from a variety $\cX$, and 
an effective $\bQ$-divisor $\cD$ on $\cX$ avoiding codimension $1$ singular points of $\cX$,
such that the following conditions hold:
\begin{itemize}
    \item All fibers $\cX_t$ are connected, normal and not contained in $\Supp(\cD)$;
    \item $K_{\cX/T}+\cD$ is $\bQ$-Cartier;
    \item $(\cX_t,\cD_t)$ is a klt pair for any $t\in T$.
\end{itemize}
\item A $\bQ$-Gorenstein flat family of klt pairs $\pi:(\cX,\cD)\to T$ together with a section $\sigma:T\to\cX$ is called a \emph{$\bQ$-Gorenstein flat family of klt singularities}. We denote
by $\sigma(\bar{t})$ the unique closed point of $\cX_{\bar{t}}$ lying over $\sigma(t)\in\cX_t$.
\end{enumerate}
\end{defn}

\begin{prop} Let $\pi:(\cX,\cD)\to T$ be a $\bQ$-Gorenstein flat family of klt pairs over a normal variety $T$. Then
\begin{enumerate}
    \item There exists a closed subset $\cZ$ of $\cX$ 
    of codimension at least $2$, such that $\cZ_t$ has codimension at least $2$ in $\cX_t$ for every $t\in T$, and $\pi:\cX\setminus \cZ\to T$ is a smooth morphism.
    \item $\cX$ is normal.    
    \item For any morphism $f:T'\to T$ from a normal variety $T'$
    to $T$, the base change $\pi_{T'}:(\cX_{T'},\cD_{T'})=(\cX,\cD)
    \times_{T}T'\to T'$ is a $\bQ$-Gorenstein flat family of klt 
    pairs over $T'$, and $K_{\cX_{T'}/T'}+\cD_{T'}=g^*(K_{\cX/T}+\cD)$
    where $g:\cX_{T'}\to \cX$ is the base change of $f$.
\end{enumerate}
\end{prop}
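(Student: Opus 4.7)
The plan is to establish the three parts in sequence; part (1) is the main technical ingredient for (2), and (2) in turn feeds into (3).

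For (1), the natural candidate is to take $\cZ$ to be the complement of the smooth locus of $\pi$, which is closed since smoothness is an open condition. Because $\pi$ is flat, $\cZ\cap\cX_t$ equals the non-smooth locus of $\cX_t/\kappa(t)$, and in characteristic $0$ (where $\kappa(t)$ is perfect) this coincides with the non-regular locus; by the $R_1$ condition implied by normality of the fiber, it has codimension at least $2$ in $\cX_t$ for every $t\in T$. To upgrade this to codimension in $\cX$, I invoke the dimension formula for flat morphisms: for any $\xi\in\cZ$ mapping to $t\in T$,
\[
\dim \cO_{\cX,\xi} \;=\; \dim \cO_{T,t} + \dim \cO_{\cX_t, \xi} \;\geq\; \dim \cO_{\cX_t, \xi}.
\]
Applying this to the generic point of each irreducible component of $\cZ$ gives that $\cZ$ has codimension at least $2$ in $\cX$.

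For (2), I use Serre's criterion $R_1+S_2$. The $R_1$ condition is immediate from (1). For $S_2$ I appeal to depth additivity for flat morphisms: for $x\in\cX$ with $t=\pi(x)$,
\[
\depth \cO_{\cX,x} \;=\; \depth \cO_{T,t} + \depth \cO_{\cX_t, x},
\]
together with the analogous formula for Krull dimensions. Since $T$ and all fibers are normal, hence $S_2$, each summand satisfies the $S_2$ inequality, and the elementary bound $\min(2,a)+\min(2,b)\geq\min(2,a+b)$ combines these into the $S_2$ condition for $\cX$.

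For (3), let $g:\cX_{T'}\to\cX$ denote the base change. Flatness and surjectivity pass to $\pi_{T'}$ automatically. The scheme-theoretic fiber over $t'\in T'$ is $(\cX_{T'})_{t'}=\cX_{f(t')}\otimes_{\kappa(f(t'))}\kappa(t')$, which remains normal because normality survives arbitrary field extensions in characteristic $0$, and remains klt because log discrepancies are invariant under such extensions (treated systematically in Section \ref{sec_fieldext}). Using (2) to see that both $\cX$ and $\cX_{T'}$ are normal, the compatibility of relative dualizing sheaves with flat base change yields the identity $K_{\cX_{T'}/T'}+\cD_{T'}=g^*(K_{\cX/T}+\cD)$, and in particular that this divisor is $\bQ$-Cartier.

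The main obstacle I anticipate is the subtlety in (3) of verifying that $\cX_{T'}$ inherits the strict irreducibility needed to qualify as a variety in the paper's sense, and that its fibers remain connected under base change: a fibered product of integral schemes along morphisms between normal varieties can break irreducibility unless the relevant fibers are geometrically integral. In the main applications $\bk$ is algebraically closed and $T'\to T$ is a morphism between normal $\bk$-varieties, so geometric integrality of the fibers sidesteps this issue.
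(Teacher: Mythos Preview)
Your outline matches the paper's proof closely: the same choice of $\cZ$ as the non-smooth locus of $\pi$, the same Serre-criterion argument via depth additivity for (2), and the same ``repeat the argument of (2) after base change'' strategy for (3), including the irreducibility caveat you flag at the end. The one place to sharpen is the identity $K_{\cX_{T'}/T'}+\cD_{T'}=g^*(K_{\cX/T}+\cD)$: since $f:T'\to T$ is arbitrary (not flat), the paper argues this concretely by restricting to the smooth locus $\cX\setminus\cZ$, where the relative canonical is a line bundle compatible with any base change, and then extending the $\bQ$-linear equivalence across the codimension-$2$ set $\cZ_{T'}$---this makes precise what your phrase ``compatibility of relative dualizing sheaves with flat base change'' should mean here.
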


\begin{proof}
(1) Assume $\pi$ is of relative dimension $n$. Let 
$\cZ:=\{x\in \cX\mid \dim_{\kappa(x)}\Omega_{\cX/T}\otimes\kappa(x)>n\}$.
It is clear that $\cZ$ is Zariski closed. Since $\bk$ is of characteristic $0$,
$\cZ_t=\cZ\cap\cX_t$ is the singular locus of $\cX_t$. Hence $\mathrm{codim}_{\cX_t}\cZ_t\geq 2$
 because $\cX_t$ is normal.

(2) From (1) we know that $\cZ$ is of codimension at least $2$ in 
$\cX$, and $\cX\setminus \cZ$  is smooth over $T$. Thus
$\cX\setminus (\cZ\cup\pi^{-1}(T_{\mathrm{sing}}))$ is regular,
and $\cZ\cup\pi^{-1}(T_{\mathrm{sing}})$ has codimension at least 
$2$ in $\cX$. So $\cX$ satisfies property $(R_1)$. Since $\pi$ is flat,
 for any point $x\in\cX_t$ we have $\depth(\cO_{x,\cX})=\depth(\cO_{x,\cX_t})+\depth(\cO_{t,T})$
 by \cite[(21.C) Corollary 1]{mat80}. Hence it is easy to
 see that $\cX$ satisfies property ($S_2$) since both $\cX_t$ and $T$
 are normal. Hence $\cX$ is normal.
 
(3) Let $\cZ_{T'}:=\cZ\times_T T'$, then $\cX_{T'}\setminus\cZ_{T'}$
is smooth over $T'$. Since the fibers of $\pi_{T'}$ and $T'$ are
irreducible, we know that $\cX_{T'}$ is also irreducible. Thus
the same argument of (2) implies that $\cX_{T'}$ satisfies 
both ($R_1$) and ($S_2$), which means $\cX_{T'}$ is normal. Since
$\pi|_{\cX\setminus\cZ}$ is smooth, we know that $K_{\cX_{T'}/T'}+\cD_{T'}$
and $g^*(K_{\cX/T}+\cD)$ are $\bQ$-linearly equivalent after restriting
to $\cX_{T'}\setminus \cZ_{T'}$. Since $\cZ_{T'}$ is of 
codimension at least $2$ in $\cX_{T'}$, the $\bQ$-linear equivalence over
$\cX_{T'}\setminus\cZ_{T'}$ extends to $\cX_{T'}$. Thus we finish
the proof.
\end{proof}

\begin{defn}
\begin{enumerate}[label=(\alph*)]
 \item Let $Y$ be a normal projective variety. Let $E$
 be an effective $\bQ$-divisor on $Y$. We say that
 $(Y,E)$ is a \emph{log Fano pair} if $(Y,E)$ is a klt pair and 
 $-(K_Y+E)$ is $\bQ$-Cartier and ample. We say $Y$ is a 
 \emph{$\bQ$-Fano variety} if $(Y,0)$ is a log Fano pair.
 \item
 Let $T$ be a normal variety. A $\bQ$-Gorenstein  family
 of klt pairs $\varphi:(\cY,\cE)\to T$ is called a 
 \emph{$\bQ$-Gorenstein flat family of log Fano pairs} if $\varphi$ is
 proper and $-(K_{\cY/T}+\cE)$ is $\varphi$-ample.
\end{enumerate}

\end{defn}

The following proposition is a natural generalization of \cite[Proposition A.2 and A.3]{blu16}. It should be well-known to experts. The proof is omitted because it is essentially the same as \cite[Appendix A]{blu16}.

\begin{prop}\label{lctsemicont}
Let $\pi:(\cX,\cD)\to T$ be a $\bQ$-Gorenstein flat family of klt pairs over a normal variety $T$. Let $\fa$ be an ideal sheaf of $\cX$. Then
\begin{enumerate}
\item The function $t\mapsto \lct(\cX_t,\cD_t;\fa_t)$ on $T$ is constructible;
\item If in addition $V(\fa)$ is proper over $T$, then the function $t\mapsto \lct(\cX_t,\cD_t;\fa_t)$ is lower semi-continuous with respect to the Zariski topology on $T$.
\end{enumerate}
\end{prop}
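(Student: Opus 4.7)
The plan is to follow the strategy of Blum's Appendix A in \cite{blu16} (itself based on de Fernex--Ein--Musta\c{t}\u{a}), adapted from $\bQ$-Gorenstein smoothings of a single klt singularity to $\bQ$-Gorenstein flat families over an arbitrary normal base $T$. The common thread in both parts is to compute $\lct$ on a carefully chosen log resolution and track how that resolution degenerates fiber-by-fiber.

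For part (1), I would produce a simultaneous log resolution. By embedded resolution of singularities in characteristic zero, choose a proper birational morphism $f:\widetilde{\cX}\to \cX$ from a smooth variety such that $f^{-1}(\cD\cup V(\fa)\cup \cX_{\mathrm{sing}})$ is a simple normal crossings divisor with components $E_1,\dots,E_N$. The rational numbers $a_i:=1+\ord_{E_i}(K_{\widetilde{\cX}/\cX}-f^*\cD)$ and $b_i:=\ord_{E_i}(\fa)$ do not depend on $t$. By generic smoothness in characteristic zero, there is a dense Zariski open $U\subseteq T$ over which $f$ restricts to a log resolution of $(\cX_t,\cD_t+V(\fa_t))$ for every $t\in U$; hence $t\mapsto \lct(\cX_t,\cD_t;\fa_t)=\min_i a_i/b_i$ is constant on $U$. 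Noetherian induction on the irreducible components of $T\setminus U$ yields constructibility.

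For part (2), the extra ingredient is the properness of $V(\fa)\to T$, which prevents the non-klt locus from escaping to infinity within fibers. Fix a rational $c<\lct(\cX_o,\cD_o;\fa_o)$; it suffices to prove $\lct(\cX_t,\cD_t;\fa_t)>c$ on a Zariski open neighborhood of $o$. Using that $\cX$ is normal and that $\cX_o$ is a Cartier divisor after shrinking $T$ to an affine neighborhood of $o$ (via flatness of $\pi$), inversion of adjunction applied to the $\bQ$-Gorenstein family $\pi$ deduces plt-ness of $(\cX,\cD+c\cdot\fa+\cX_o)$ along $\cX_o$ from klt-ness of $(\cX_o,\cD_o+c\cdot\fa_o)$; equivalently, $(\cX,\cD+c\cdot\fa)$ is klt in some open neighborhood of $\cX_o$ in $\cX$. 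The non-klt locus of $(\cX,\cD+c\cdot\fa)$ is closed in $\cX$ and contained in $V(\fa)$, so properness of $V(\fa)\to T$ forces its image in $T$ to be closed, and this image avoids $o$; its complement is the desired open neighborhood.

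The main obstacle is the inversion-of-adjunction step for the $\bQ$-Gorenstein family: one must verify carefully that $K_{\cX/T}+\cD$ restricts correctly to the fiber (which is exactly where normality of $\cX$ and the Cartier property of $\cX_o$ enter) and that klt-ness genuinely propagates from the central fiber to a neighborhood in the total space. Without the properness of $V(\fa)\to T$ the image in $T$ of the non-klt locus would only be constructible rather than Zariski-closed, so one would recover (1) but not (2); this is precisely the role of the properness hypothesis.
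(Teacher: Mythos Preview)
Your approach is exactly what the paper does: it omits the proof, stating that it is ``essentially the same as \cite[Appendix A]{blu16}.'' The one technical slip in your part (2) is the claim that $\cX_o$ is a Cartier divisor in $\cX$ after shrinking $T$; this fails whenever $\dim T\geq 2$ at $o$, and flatness of $\pi$ does not help. The usual remedy is to invoke part (1) first: once the function is constructible, lower semi-continuity is equivalent to stability under generalization, which you may check after base-changing along a map from the spectrum of a DVR (or after slicing $T$ by general hyperplanes through $o$ down to a smooth curve). Over a one-dimensional regular base the closed fiber \emph{is} Cartier, and your inversion-of-adjunction argument then goes through verbatim.
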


\section{Comparison of normalized volumes and normalized colengths}
\subsection{Normalized colengths of klt singularities}\label{hatl}
\begin{defn} Let  $x\in (X,D)$ be a klt singularity over an algebraically closed field $\bk$ of characteristic $0$.
Denote its local ring by $(R,\fm):=(\cO_{x,X},\fm_x)$. 
\begin{enumerate}[label=(\alph*)]
\item Given constants $c\in\bR_{>0}$ and $k\in\bN$, we define
the \emph{normalized colength of $x\in (X,D)$ with respect
to $c,k$} as 
\[
\widehat{\ell_{c,k}}(x,X,D):=n!\cdot\inf_{\substack{\fm^k\subset\fa\subset\fm\\\ell(R/\fa)\geq ck^n}}\lct(\fa)^n\cdot\ell(R/\fa).
\]
Here $\fa$ is a $\fm$-primary ideal.
\item 
Given a constant $c\in\bR_{>0}$, we define the \emph{asymptotic normalized colength function
of $x\in (X,D)$ with respect to $c$} as 
\[
\widehat{\ell_{c,\infty}}(x,X,D):=\liminf_{k\to\infty}\widehat{\ell_{c,k}}(x,X,D).
\]
\end{enumerate}
\end{defn}
It is clear that $\widehat{\ell_{c,k}}$ is an increasing function in $c$. The main result in this section is the following theorem.

\begin{thm}\label{hvolcolength}
For any klt singularity $x\in (X,D)$ over an algebraically closed field $\bk$ of characteristic $0$, there exists
$c_0=c_0(x,X,D)>0$ such that 
\begin{equation}\label{eq_hvolcolength}
 \widehat{\ell_{c,\infty}}(x,X,D)=\hvol(x, X, D) \quad\textrm{ whenever }0< c\leq c_0.
\end{equation}
\end{thm}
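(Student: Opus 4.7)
The plan is to prove \eqref{eq_hvolcolength} by a pair of inequalities.

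The easy direction $\widehat{\ell_{c,\infty}}(x,X,D) \geq \hvol(x,X,D)$ holds for every $c \geq 0$ and follows at once from Lech's inequality $\e(\fa)\leq n!\,\ell(R/\fa)$ for $\fm$-primary ideals, combined with the characterization of normalized volume in Theorem \ref{eqhvol}: for every admissible $\fa$,
\[
n!\cdot\lct(X,D;\fa)^n\cdot\ell(R/\fa)\ \geq\ \lct(X,D;\fa)^n\,\e(\fa)\ \geq\ \hvol(x,X,D).
\]
Infimizing over admissible $\fa$ and taking $\liminf$ in $k$ yields the desired bound, with no restriction on $c$.

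For the reverse inequality $\widehat{\ell_{c,\infty}}\leq\hvol$ for small $c$, the idea is to manufacture near-optimal ideals from an approximating sequence for the infimum in Theorem \ref{eqhvol}. Given $\epsilon>0$, choose an $\fm$-primary ideal $\fb$ with $\lct(\fb)^n\e(\fb)<\hvol(x,X,D)+\epsilon$, and consider the family of powers $\fa_m:=\fb^m$. Since $\lct(\fb^m)=\lct(\fb)/m$ by scaling, it suffices to control $n!\,\ell(R/\fb^m)/m^n$ as $m\to\infty$. Lemma \ref{colengthmult}---a comparison of colengths and multiplicities built on the local Newton-Okounkov-body constructions of \cite{cut13, kk14}---guarantees $n!\,\ell(R/\fb^m)/m^n\to\e(\fb)$, so that
\[
n!\cdot\lct(\fb^m)^n\cdot\ell(R/\fb^m)\ =\ \lct(\fb)^n\cdot\frac{n!\,\ell(R/\fb^m)}{m^n}\ \longrightarrow\ \lct(\fb)^n\,\e(\fb)\ <\ \hvol+\epsilon.
\]
Admissibility of $\fa_m=\fb^m$ in the infimum defining $\widehat{\ell_{c,k}}$ requires $\fm^k\subset \fb^m$ for some $k=k(m)$, which is automatic with $k\leq m\cdot r(\fb)$ where $r(\fb):=\min\{s\colon \fm^s\subset\fb\}$, and also $\ell(R/\fb^m)\geq ck^n$. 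The colength constraint becomes, in the limit, $c\leq \e(\fb)/(n!\cdot r(\fb)^n)$, so it is enough to choose $c_0$ below a uniform positive lower bound on $\e(\fb)/r(\fb)^n$ as $\fb$ ranges over a cofinal family of near-minimizers for the infimum in Theorem \ref{eqhvol}.

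The principal obstacle is securing this uniform lower bound, and it is where the klt hypothesis plays its essential role. I expect to obtain the bound from Izumi-type estimates (see \cite{jm12, li15a}), which in a klt singularity uniformly compare an arbitrary valuation $v$ centered at $x$ with the $\fm_x$-adic valuation and thereby prevent near-minimizing sequences from degenerating. As an alternative and perhaps cleaner formulation, one may replace powers $\fb^m$ by valuation ideals $\fa_m(v):=\{f\in\cO_{x,X}\colon v(f)\geq m\}$ attached to a near-minimizing $v\in\Val_{X,x}$ normalized by $v(\fm_x)=1$: then $\fm^m\subset\fa_m(v)$ is immediate, $\lct(X,D;\fa_m(v))\leq A_{(X,D)}(v)/m$ by the Jonsson-Musta\c{t}\u{a} formula for the lct, the convergence $n!\,\ell(R/\fa_m(v))/m^n\to\vol(v)$ is again the content of Lemma \ref{colengthmult}, and the required uniformity becomes a uniform positive lower bound on $\vol(v)$---which Izumi's two-sided comparison of $A_{(X,D)}(v)$ with $v(\fm_x)$ supplies. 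Either way, $c_0=c_0(x,X,D)$ may be taken to be half of the resulting uniform bound divided by $n!$, completing the proof.
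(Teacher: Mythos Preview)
Your ``$\leq$'' direction is essentially the paper's argument: take valuation ideals $\fa_k(v)$ of near-minimizing valuations normalized by $v(\fm)=1$, use Izumi's inequality to get a uniform inclusion $\fa_k(v)\subset\fm^{\lceil k/C_2\rceil}$ and hence a uniform lower bound $\ell(R/\fa_k(v))\geq c_0 k^n$, and conclude via $\lct(\fa_\bullet(v))\leq A_{(X,D)}(v)$. Two small corrections: the convergence $n!\,\ell(R/\fb^m)/m^n\to\e(\fb)$ is simply the definition of Hilbert--Samuel multiplicity, not an application of Lemma~\ref{colengthmult}; and your first variant using powers $\fb^m$ would still need the Izumi-type uniformity you mention, so the valuation-ideal version is the cleaner one (and is what the paper does).

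The genuine gap is in your ``easy'' direction. The inequality $\e(\fa)\leq n!\,\ell(R/\fa)$ is \emph{false} for singular local rings: Lech's theorem only gives $\e(\fa)\leq n!\,\e(\fm)\,\ell(R/\fa)$, and the factor $\e(\fm)$ cannot be dropped. For instance, the $\tfrac{1}{m}(1,\dots,1)$ quotient singularity (the cone over the $m$-th Veronese of $\bP^{n-1}$) is klt with $\e(\fm)=m^{n-1}$ and $\ell(R/\fm)=1$, so already $\fa=\fm$ violates your inequality once $m^{n-1}>n!$. Thus the ``$\geq$'' direction is \emph{not} trivial, and this is exactly where the paper invokes Lemma~\ref{colengthmult}. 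That lemma only yields the asymptotic comparison $n!\,\ell(R/\fa)\geq(1-\epsilon)\e(\fa)$ for ideals satisfying the two-sided constraint $\fm^k\subset\fa\subset\fm^{\lceil\delta k\rceil}$ with $k\gg0$; the paper then handles ideals with $\fa\not\subset\fm^{\lceil\delta k\rceil}$ separately via a Skoda-type estimate $\lct(\fa)\geq 1/(c_1\ord_\fm(\fa))$ coming from Izumi, showing that such ideals automatically satisfy $n!\,\lct(\fa)^n\ell(R/\fa)\geq\hvol$ once $\delta$ is small enough. In short, you have the roles of the two directions reversed: the Newton--Okounkov machinery of Lemma~\ref{colengthmult} is what drives the ``$\geq$'' inequality, not the ``$\leq$'' one.
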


\begin{proof}
We first show the ``$\leq$'' direction.
Let us take a sequence of valuations $\{v_i\}_{i\in\bN}$ such that
$\lim_{i\to\infty}\hvol(v_i)=\hvol(x,X,D)$. We may rescale
$v_i$ so that $v_i(\fm)=1$ for any $i$. Since $\{\hvol(v_i)\}_{i\in\bN}$ are
bounded from above, by \cite[Theorem 1.1]{li15a} we know that
there exists $C_1>0$ such that $A_{(X,D)}(v_i)\leq C_1$ for any 
$i\in\bN$. Then by Li's Izumi type inequality \cite[Theorem 3.1]{li15a},
there exists $C_2>0$ such that $\ord_{\fm}(f)\leq v_i(f)\leq 
C_2\ord_{\fm}(f)$ for any $i\in\bN$ and any $f\in R$.
As a result, we have $\fm^{k}\subset\fa_{k}(v_i)\subset\fm^{\lceil
k/C_2\rceil}$ for any $i,k\in \bN$.
Thus $\ell(R/\fa_k(v_i))\geq\ell(R/\fm^{\lceil k/C_2\rceil})\sim
\frac{\e(\fm)}{n!C_2^{n}}k^n$. Let us take $c_0=\frac{e(\fm)}{2n!C_2^n}$,
then for $k\gg 1$ we have $\ell(R/\fa_k(v_i))\geq c_0 k^n$
for any $i\in\bN$. Therefore, for any $i\in\bN$ we have
\[
 \widehat{\ell_{c_0,\infty}}(x,X,D)\leq n!\liminf_{k\to\infty}\lct(\fa_k(v_i))^n\ell(R/\fa_k(v_i))
 = \lct(\fa_\bullet(v_i))^n\vol(v_i)\leq \hvol(v_i).
\]
In the last inequality we use $\lct(\fa_\bullet(v_i))\leq A_{(X,D)}(v_i)$
as in the proof of \cite[Theorem 27]{liu16}.
Thus $ \widehat{\ell_{c_0,\infty}}(x,X,D)\leq\lim_{i\to\infty}\hvol(v_i)=\hvol(x,X,D)$.
This finishes the proof of the ``$\leq$'' direction.

For the ``$\geq$'' direction, we will show that 
$\widehat{\ell_{c,\infty}}(x,X,D)\geq \hvol(x,X,D)$ for
any $c>0$. By a logarithmic version of the
Izumi type estimate \cite[Theorem 3.1]{li15a},
there exists a constant $c_1=c_1(x,X,D)>0$ such that 
$v(f)\leq c_1 A_{(X,D)}(v)\ord_{\fm}(f)$ for any valuation
$v\in \Val_{X,x}$ and any function $f\in R$.
For any $\fm$-primary ideal $\fa$, there exists a
divisorial valuation $v_0\in\Val_{X,x}$ computing $\lct(\fa)$
by \cite[Lemma 26]{liu16}. Hence we have the following Skoda
type estimate:
\begin{align*}
 \lct(\fa)=\frac{A_{(X,D)}(v_0)}{v_0(\fa)}\geq \frac{A_{X,D}(v_0)}
 {c_1 A_{(X,D)}(v_0)\ord_{\fm}(\fa)} =\frac{1}{c_1\ord_{\fm}(\fa)}.
\end{align*}
Let $0<\delta<1$ be a positive number. If
$\fa\not\subset\fm^{\lceil\delta k\rceil}$ and $\ell(R/\fa)\geq c k^n$, then 
\[
\lct(\fa)^n\cdot\ell(R/\fa)\geq \frac{ ck^n}
{c_1^n(\lceil\delta k\rceil-1)^n}\geq \frac{ c}{c_1^n\delta^n}.
\]
If we choose $\delta$ sufficiently small such that
$\delta^n\cdot c_1^n\hvol(x,X,D)\leq n! c$, then for any
$\fm$-primary ideal $\fa$ satisfying $\fm^k\subset\fa\not\subset
\fm^{\lceil\delta k\rceil}$ and $\ell(R/\fa)\geq ck^n$ we have
\[
 n!\cdot \lct(\fa)\cdot\ell(R/\fa)\geq \hvol(x,X,D).
\]
Thus it suffices to show
\[
\hvol(x,X,D)\leq n!\cdot\liminf_{k\to\infty}\inf_{\substack{\fm^k\subset\fa\subset\fm^{\lceil\delta k\rceil}\\\ell(R/\fa)\geq ck^n}}\lct^n(\fa)\ell(R/\fa).
\]
By Lemma \ref{colengthmult}, we know that for any $\epsilon>0$
there exists $k_0=k_0(\delta,\epsilon,(R,\fm))$ such that for any $k\geq k_0$ we have
\[
n!\cdot\inf_{\fm^k\subset\fa\subset\fm^{\lceil\delta k\rceil}
}\lct^n(\fa)\ell(R/\fa)\geq (1-\epsilon)\inf_{\fm^k\subset\fa\subset\fm^{\lceil\delta k\rceil}}
\lct(\fa)^n\e(\fa)\geq (1-\epsilon)\hvol(x,X,D).
\]
Hence the proof is finished.
\end{proof}

The following result on comparison between colengths 
and multiplicities is crucial in the proof of Theorem 
\ref{hvolcolength}. Note that Lemma \ref{colengthmult}
is a special case of Lech's inequality \cite[Theorem 3]{lec64} when $R$ is a regular
local ring.

\begin{lem}\label{colengthmult}
Let $(R,\fm)$ be an $n$-dimensional analytically irreducible
Noetherian local domain. Assume that the residue field
$R/\fm$ is algebraically closed. Then for any positive 
numbers $\delta,\epsilon\in(0,1)$, 
there exists $k_0=k_0(\delta,\epsilon,(R,\fm))$ such that 
for any $k\geq k_0$ and any ideal 
$\fm^{k}\subset\fa\subset\fm^{\lceil\delta k\rceil}$, we have
\[
n!\cdot\ell(R/\fa)\geq (1-\epsilon)\e(\fa).
\]
\end{lem}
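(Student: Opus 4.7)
The plan is to express both $n!\ell(R/\fa)$ and $\e(\fa)$ as invariants of a single subset of the value semigroup of a rank-$n$ valuation, and then deduce the inequality from a uniform lattice-point-versus-volume estimate. The two-sided sandwich $\fm^k\subset\fa\subset\fm^{\lceil\delta k\rceil}$ will ensure that the relevant convex region both has linear size $\Theta(k)$ and sits in a box of diameter $O(k)$, so the lattice-counting error becomes negligible.

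After passing to the completion $\widehat R$ (a complete normal local domain with the same colengths and multiplicities as $R$), I would fix a rank-$n$ Abhyankar/flag valuation $\nu$ of $\mathrm{Frac}(\widehat R)$ centered at $\fm$ whose value semigroup $\Gamma:=\nu(\widehat R\setminus\{0\})\subset\bZ^n$ generates a strongly convex full-dimensional rational cone $C$. The standard associated-graded count then gives $\ell(R/\fa)=|\Gamma\setminus\nu(\fa)|$ for every $\fm$-primary $\fa$. On the multiplicity side, Cutkosky's theorem for graded families of $\fm$-primary ideals in analytically irreducible local domains, applied to $\{\fa^m\}_{m\geq 1}$, identifies
\[
\e(\fa)\;=\;n!\cdot\vol\bigl(C\setminus\Delta(\fa)\bigr),\qquad \Delta(\fa):=\overline{\mathrm{conv}\Bigl(\bigcup_{m\geq 1}\tfrac1m\,\nu(\fa^m)\Bigr)}.
\]
Since $\nu(\fa)\subset\Delta(\fa)$, setting $K_\fa:=C\setminus\mathrm{conv}(\nu(\fa))$ yields the trivial inclusions $\Gamma\setminus\nu(\fa)\supset\Gamma\cap K_\fa$ and $K_\fa\supset C\setminus\Delta(\fa)$, which reduce the problem to comparing $|\Gamma\cap K_\fa|$ with $\vol(K_\fa)$.

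Next I invoke Izumi's inequality to produce constants $0<c_1\leq c_2$ (depending only on $R,\fm,\nu$) with $c_1\ord_\fm(f)\leq|\nu(f)|\leq c_2\ord_\fm(f)$ for all $f\in R\setminus\{0\}$, where $|\cdot|$ is a fixed norm on $\bR^n$. Under the hypothesis $\fm^k\subset\fa\subset\fm^{\lceil\delta k\rceil}$, this sandwich forces $\mathrm{conv}(\nu(\fa))$ to avoid the ball of radius $c_1\delta k$ around the origin while $K_\fa\subset\{|v|<c_2k\}$; in particular $K_\fa$ is bounded of diameter $O(k)$, contains a Euclidean half-ball of radius $\Theta(k)$, and therefore has volume $\Theta(k^n)$. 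The convex-geometric result of Appendix \ref{app} is precisely the uniform assertion that, for any such cone-complement $K$ with inradius $\geq c_1\delta k$ and diameter $\leq c_2 k$, one has $|\Gamma\cap K|\geq (1-\epsilon)\vol(K)$ once $k\geq k_0(\delta,\epsilon,R,\fm,\nu)$. Chaining with the inequalities above gives
\[
n!\,\ell(R/\fa)\;\geq\;n!\,|\Gamma\cap K_\fa|\;\geq\;(1-\epsilon)\,n!\,\vol(K_\fa)\;\geq\;(1-\epsilon)\,n!\,\vol(C\setminus\Delta(\fa))\;=\;(1-\epsilon)\e(\fa).
\]

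The main obstacle I expect is the uniformity in Appendix \ref{app}: the lattice-point estimate must hold simultaneously for the infinite family of convex bodies $\mathrm{conv}(\nu(\fa))$ attached to all admissible $\fa$, with the error $\bigl||\Gamma\cap K|-\vol(K)\bigr|$ controlled by $O(k^{n-1})$ uniformly in $\fa$. The upper bound $\fa\subset\fm^{\lceil\delta k\rceil}$ is indispensable for this: without it, $\mathrm{conv}(\nu(\fa))$ could approach the full cone arbitrarily closely, collapsing the inradius of $K_\fa$ and spoiling the surface-to-volume ratio. Together with Izumi's two-sided comparison and the Cutkosky identity, this non-degeneracy is what promotes the asymptotic multiplicity formula into the claimed non-asymptotic comparison.
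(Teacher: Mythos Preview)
Your overall strategy---pass to a good rank-$n$ valuation, translate both $\ell(R/\fa)$ and $\e(\fa)$ into lattice-point and volume data of the value semigroup, use Izumi to control the geometry via the sandwich $\fm^k\subset\fa\subset\fm^{\lceil\delta k\rceil}$, and then invoke a uniform lattice-point estimate---is exactly the paper's approach. The gap is in the step where you appeal to Appendix~\ref{app}. Proposition~\ref{convexgeo} is a statement about \emph{convex} bodies $\Delta\subset[0,1]^n$ and $\bZ^n$-lattice points; it says nothing about ``cone-complements'' and nothing about $\Gamma$-points. Your region $K_\fa=C\setminus\mathrm{conv}(\nu(\fa))$ is not convex (it is a cone with a convex set removed), so Proposition~\ref{convexgeo} does not apply to it directly, and the sentence ``the convex-geometric result of Appendix~\ref{app} is precisely the uniform assertion that, for any such cone-complement $K$\ldots'' mischaracterizes what is actually proved there.

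The paper fixes this by applying the appendix to the \emph{convex} side of the decomposition. It works inside a fixed truncated cone $\Delta=C(\cS)\cap\xi_{\leq 2r_0}$ and forms the convex Okounkov body $\Delta_{\fa,k}:=\tfrac{1}{k}\Delta_\fa^{(k)}\subset\Delta$ of the graded family $\{\fa^i\}$. Then $\e(\fa)/n!=k^n\bigl(\vol(\Delta)-\vol(\Delta_{\fa,k})\bigr)$, while $\ell(R/\fa)\geq\#\Gamma_k-\#(k\Delta_{\fa,k}\cap\bZ^n)$. The term $\#\Gamma_k/k^n\to\vol(\Delta)$ needs no uniformity in $\fa$ (the semigroup $\Gamma$ is fixed), and Proposition~\ref{convexgeo} supplies the uniform upper bound $\#(k\Delta_{\fa,k}\cap\bZ^n)/k^n\leq\vol(\Delta_{\fa,k})+\epsilon_2$ precisely because $\Delta_{\fa,k}$ is convex and sits in the fixed body $\Delta$. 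Finally, the hypothesis $\fa\subset\fm^{\lceil\delta k\rceil}$ forces $\Delta_{\fa,k}\subset\Delta\setminus\Delta'$ with $\Delta':=C(\cS)\cap\xi_{<\delta}$, giving the uniform lower bound $\vol(\Delta)-\vol(\Delta_{\fa,k})\geq\vol(\Delta')>0$; this is what converts the \emph{additive} error from the appendix into the required \emph{multiplicative} factor $(1-\epsilon)$. Your ``inradius/surface-to-volume'' heuristic is gesturing at this same volume lower bound, but the actual argument runs through the convex body $\Delta_{\fa,k}$, not through $K_\fa$.
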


\begin{proof}
By \cite[7.8]{kk14} and \cite[Section 4]{cut13}, $R$ admits a 
\emph{good} valuation $\nu:R\to\bZ^n$ for some total order on $\bZ^{n}$.
 Let $\cS:=\nu(R\setminus\{0\})\subset\bN^n$ and $C(\cS)$ be the closed convex hull of $\cS$. Then we know that
\begin{itemize}
\item $C(\cS)$ is a strongly convex cone;
\item There exists a linear functional $\xi:\bR^n\to\bR$ such that $C(\cS)\setminus\{0\}\subset\xi_{>0}$;
\item There exists $r_0\geq 1$ such that for any $f\in R\setminus\{0\}$, we have
\begin{equation}\label{eq_izumi}
\ord_{\fm}(f)\leq \xi(\nu(f))\leq r_0\ord_{\fm}(f).
\end{equation}
\end{itemize}

Suppose $\fa$ is an ideal satisfying $\fm^k\subset\fa\subset\fm^{\lceil\delta k\rceil}$. Then we have $\nu(\fm^k)\subset\nu(\fa)\subset\nu(\fm^{\lceil\delta k\rceil})$. By \eqref{eq_izumi}, we know that 
\[
\cS\cap\xi_{\geq r_0 k}\subset 
\nu(\fa)\subset\cS\cap\xi_{\geq \delta k}.
\]
Similarly, we have $\cS\cap\xi_{\geq r_0 ik}\subset 
\nu(\fa^i)\subset\cS\cap\xi_{\geq \delta ik}$ for any positive integer $i$.

Let us define a semigroup $\Gamma\subset\bN^{n+1}$ as follows:
\[
\Gamma:=\{(\alpha,m)\in\bN^n\times\bN\colon x\in\cS\cap\xi_{\leq 2r_0 m}\}.
\]
For any $m\in\bN$, denote by $\Gamma_m:=\{\alpha\in\bN^n\colon(\alpha,m)\in\Gamma\}$. It is easy to see $\Gamma$ satisfies \cite[(2.3-5)]{lm09}, thus \cite[Proposition 2.1]{lm09} implies
\[
\lim_{m\to\infty}\frac{\#\Gamma_m}{m^n}=\vol(\Delta),
\]
where $\Delta:=\Delta(\Gamma)$ is a convex body in $\bR^{n}$ 
defined in \cite[Section 2.1]{lm09}. It is easy to see that $\Delta= C(\cS)\cap\xi_{\leq 2r_0}$.

Let us define $\Gamma^{(k)}:=\{(\alpha,i)\in\bN^n\times\bN\colon(\alpha,ik)\in\Gamma\}$. Then we know that $\Delta^{(k)}:=\Delta(\Gamma^{(k)})=k\Delta$. For an ideal $\fa$ and $k\in\bN$ satisfying $\fm^k\subset\fa\subset\fm^{\lceil\delta k\rceil}$, we define 
\[
\Gamma_{\fa}^{(k)}:=\{(\alpha,i)\in\Gamma^{(k)}\colon\alpha\in\nu(\fa^i)\}.
\]
Then it is clear that $\Gamma_{\fa}^{(k)}$ also satisfies 
\cite[(2.3-5)]{lm09}. Since  $\nu(\fa^i)=(\cS\cap\xi_{> 2r_0ik})
\cup\Gamma_{\fa,i}^{(k)}$ and $R/\fm$ is algebraically 
closed, we have $\ell(R/\fa^i)=\#(\Gamma_i^{(k)}\setminus\Gamma_{\fa,i}^{(k)})$
because $\nu$ has one-dimensional leaves. Again by \cite[Proposition 2.11]{lm09}, we have
\[
n!\e(\fa)=\lim_{i\to\infty}\frac{\ell(R/\fa^i)}{i^n}=\lim_{i\to\infty}\frac{\#(\Gamma_i^{(k)}\setminus\Gamma_{\fa,i}^{(k)})}{i^n}
=\vol(\Delta^{(k)})-\vol(\Delta_{\fa}^{(k)}),
\]
where $\Delta_{\fa}^{(k)}:=\Delta(\Gamma_{\fa}^{(k)})$. Since $\Gamma_{\fa,i}^{(k)}\subset\nu(\fa^i)\subset\xi_{\geq\delta ik}$, we know that $\Delta_{\fa}^{(k)}\subset\xi_{\geq\delta k}$. Denote by $\Delta':=C(\cS)\cap\xi_{< \delta}$, then it is clear that $\Delta_{\fa}^{(k)}\subset k(\Delta\setminus\Delta')$.

On the other hand, 
\[
\ell(R/\fa)=\#(\Gamma_{1}^{(k)}\setminus\Gamma_{\fa,1}^{(k)})
\geq\#\Gamma_{k}-\#(\Delta_{\fa}^{(k)}\cap\bZ^n).
\]
Denote by $\Delta_{\fa,k}:=\frac{1}{k}\Delta_{\fa}^{(k)}$,
then $\Delta_{\fa,k}\subset\Delta\setminus\Delta'$. Since $\vol(\Delta_{\fa,k})\leq \vol(\Delta)-\vol(\Delta')$, there exists positive numbers $\epsilon_1,\epsilon_2$ depending only on $\Delta$ and $\Delta'$ such that 
\begin{equation}\label{ineq1}
\vol(\Delta_{\fa,k})\leq\vol(\Delta)-\vol(\Delta')\leq\left(1-\frac{\epsilon_1}{\epsilon}\right)\vol(\Delta) -\frac{\epsilon_2}{\epsilon}.
\end{equation}
Let us pick $k_0$ such that for any $k\geq k_0$ and any $\fm^k\subset\fa\subset\fm^{\lceil\delta k\rceil}$, we have
\[
\frac{\#\Gamma_k}{k^n}\geq (1-\epsilon_1)\vol(\Delta),\qquad
\frac{\#(\Delta_{\fa}^{(k)}\cap\bZ^n)}{k^n}\leq\vol(\Delta_{\fa,k})+\epsilon_2.
\]
Here the second inequality is guaranteed by applying Proposition \ref{convexgeo} to $\Delta_{\fa,k}$
as a sub convex body of a fixed convex body $\Delta$. Thus
\begin{align*}
    \frac{\ell(R/\fa)-(1-\epsilon)n!\e(\fa)}{k^n}&
    \geq \frac{\#\Gamma_k}{k^n}-\frac{\#(\Delta_{\fa}^{(k)}\cap\bZ^n)}{k^n}-(1-\epsilon)(\vol(\Delta)-\vol(\Delta_{\fa,k}))\\
   & \geq (1-\epsilon_1)\vol(\Delta)-\vol(\Delta_{\fa,k})-\epsilon_2-(1-\epsilon)(\vol(\Delta)-\vol(\Delta_{\fa,k}))\\
   & =(\epsilon-\epsilon_1)\vol(\Delta)-\epsilon(\Delta_{\fa,k})-\epsilon_2\\
   &\geq 0.
\end{align*}
Here the last inequality follows from \eqref{ineq1}. Hence we finish the proof.
\end{proof}

\subsection{Normalized volumes under field extensions}\label{sec_fieldext}
In the rest of this section, we use Hilbert schemes to describe
normalized volumes of singularities after a field extension $\bK/\bk$.
Let $(X,D)$ be a klt pair over $\bk$. Let $x\in X$ be a 
$\bk$-rational point. Let $Z_k:=\Spec(\cO_{x,X}/\fm_{x,X}^k)$ 
be the $k$-th thickening of $x$.  Consider the Hilbert scheme
$H_{k,d}:=\mathrm{Hilb}_{d}(Z_k/\bk)$. For any field extension
$\bK/\bk$ we know that $H_{k,d}(\bK)$ parametrizes ideal sheaves
$\fc$ of $X_\bK$ satisfying $\fc\supset\fm_{x_{\bK},X_{\bK}}^k$
and $\ell(\cO_{x_{\bK},X_{\bK}}/\fc)=d$. In particular,
any scheme-theoretic point $h\in H_{k,d}$ corresponds to an ideal $\fb$ of $\cO_{x_{\kappa(h)},X_{\kappa(h)}}$ satisfying those two conditions, and we denote by $h=[\fb]$.

\begin{prop}\label{fieldext}
Let $\bk$ be a field of characteristic $0$. Let $(X,D)$ be a klt pair over $\bk$. Let $x\in X$ be a $\bk$-rational point. Then 
\begin{enumerate}
    \item For any field extension $\bK/\bk$ with $\bK$ algebraically closed, we have 
    \[
    \widehat{\ell_{c,k}}(x_{\bK}, X_{\bK},D_{\bK})=
    n!\cdot\inf_{d\geq ck^n, ~ [\fb]\in H_{k,d}}
    d\cdot\lct(X_{\kappa([\fb])}, D_{\kappa([\fb])};\fb)^n.
    \]
    \item With the assumption of (1), we have $$\hvol(x_{\bK}, X_{\bK},D_{\bK})=\hvol(x_{\bar{\bk}}, X_{\bar{\bk}},D_{\bar{\bk}}).$$
\end{enumerate}
\end{prop}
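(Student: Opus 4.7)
The plan is to establish (1) by identifying the various ideals $\fa$ appearing in the definition of $\widehat{\ell_{c,k}}(x_{\bK}, X_{\bK}, D_{\bK})$ with scheme-theoretic points of the Hilbert schemes $H_{k,d}$ for $d \geq ck^n$, and then to deduce (2) from (1) combined with Theorem \ref{hvolcolength}. The key technical input is the invariance of log canonical thresholds under extensions of the base field: if $\bL/\bk$ is a field extension and $\fb$ is an ideal on a klt pair $(Y, \Delta)$ over $\bk$, then $\lct(Y, \Delta; \fb) = \lct(Y_{\bL}, \Delta_{\bL}; \fb_{\bL})$. In characteristic zero this reduces to the fact that a log resolution of $(Y, \Delta; \fb)$ pulls back to a log resolution after any field extension, with divisorial valuations, orders of vanishing, and discrepancies all preserved. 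Colengths $\ell(R/\fa)$ are likewise preserved under field extension, since $R_{\bL}/\fa_{\bL} \cong (R/\fa) \otimes_{\bk} \bL$ as $\bL$-modules.

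For the ``$\geq$'' direction of (1): every ideal $\fa \subset \cO_{x_{\bK}, X_{\bK}}$ satisfying $\fm^k \subset \fa \subset \fm$ and $\ell(\cO_{x_{\bK}, X_{\bK}}/\fa) = d \geq c k^n$ defines a $\bK$-point of $H_{k,d}$, which factors uniquely through a scheme-theoretic point $h = [\fb] \in H_{k,d}$ with residue field $\kappa(h) \hookrightarrow \bK$, and $\fa$ is obtained from $\fb$ by base change along this embedding. The invariance of $\lct$ and of colength then gives $d \cdot \lct(\fa)^n = d \cdot \lct(\fb)^n$, yielding ``$\geq$''. For the ``$\leq$'' direction: by the lower semi-continuity and constructibility of $\lct$ on $H_{k,d}$ (Proposition \ref{lctsemicont}), the infimum of $\lct(\fb)^n$ over $H_{k,d}$ is attained (up to $\epsilon$) on a locally closed subset whose closed points have residue fields finite algebraic over $\bk$. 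Since $\bK$ is algebraically closed, each such residue field embeds into $\bK$; base-changing $\fb$ to $\bK$ produces an ideal contributing to the left-hand side with the same $\lct$ and colength, giving ``$\leq$'' after an $\epsilon$-approximation.

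For (2): applying (1) to both $\bK$ and $\bar{\bk}$ shows that the expression on the right-hand side depends only on $(x, X, D)$ over $\bk$ through the Hilbert schemes $H_{k,d}$, hence
\[
\widehat{\ell_{c,k}}(x_{\bK}, X_{\bK}, D_{\bK}) = \widehat{\ell_{c,k}}(x_{\bar{\bk}}, X_{\bar{\bk}}, D_{\bar{\bk}})
\]
for every $c > 0$ and $k \in \bN$. Passing to $\liminf$ as $k \to \infty$ yields the same identity with $\widehat{\ell_{c,\infty}}$ in place of $\widehat{\ell_{c,k}}$. Now choose $c > 0$ small enough that $c \leq c_0(x_{\bK}, X_{\bK}, D_{\bK})$ and $c \leq c_0(x_{\bar{\bk}}, X_{\bar{\bk}}, D_{\bar{\bk}})$, where $c_0$ is the constant from Theorem \ref{hvolcolength}; both sides then equal the respective normalized volumes, so $\hvol(x_{\bK}, X_{\bK}, D_{\bK}) = \hvol(x_{\bar{\bk}}, X_{\bar{\bk}}, D_{\bar{\bk}})$.

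The main obstacle I anticipate is ensuring that the infimum on the right-hand side of (1), taken over all scheme-theoretic points of $H_{k,d}$, is genuinely realized (up to $\epsilon$) by ideals defined over $\bK$. This is handled by the combination of constructibility and lower semi-continuity of $\lct$, which reduces the infimum to closed points of $H_{k,d}$, whose residue fields are algebraic over $\bk$ and hence embed in $\bar{\bk} \subseteq \bK$. A secondary but important verification is the invariance of $\lct$ under field extension; although standard in characteristic zero (via base change of a log resolution), it must be applied uniformly to ideals parametrized by varying residue fields, so a certain degree of care is required.
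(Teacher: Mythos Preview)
Your proposal is correct and follows essentially the same route as the paper's proof: both directions of (1) are handled exactly as you describe (factoring $\bK$-points through scheme-theoretic points for ``$\geq$'', and using constructibility/lower semi-continuity of $\lct$ to reduce the right-hand infimum to closed points of $H_{k,d}$ for ``$\leq$''), and (2) is deduced from (1) together with Theorem~\ref{hvolcolength}. The only cosmetic difference is that the paper dispenses with your $\epsilon$-approximation in the ``$\leq$'' step: since each $H_{k,d}$ is of finite type over $\bk$ and $\lct$ is constructible on it, the function takes only finitely many values and the infimum is actually attained on a stratum whose closed points are dense, so no limiting argument is needed.
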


\begin{proof} (1) We first prove the ``$\geq$'' direction.
By definition, $\widehat{\ell}_{c,k}(x_{\bK}, X_{\bK}, D_{\bK})$ is the 
infimum of $n!\cdot\lct(X_{\bK},D_{\bK};\fc)^n\ell(\cO_{X_{\bK}}/\fc)$ where
$\fc$ is an ideal on $X_{\bK}$ satisfying $\fm_{x_{\bK}}^k\subset
\fc\subset\fm_{x_{\bK}}$ and $\ell(\cO_{X_{\bK}}/\fc)=:d\geq ck^n$. Hence $[\fc]$ represents a point in 
$H_{k,d}(\bK)$. Suppose $[\fc]$ is lying over a scheme-theoretic point $[\fb]\in H_{k,d}$, then it is clear that $(X_{\bK},D_{\bK},\fc)\cong (X_{\kappa([\fb])},D_{\kappa([\fb])},\fb)\times_{\Spec(\kappa([\fb]))}\Spec(\bK)$. Hence $\lct(X_{\bK},D_{\bK};\fc)=\lct(X_{\kappa([\fb])},D_{\kappa([\fb])};\fb)$, and the ``$\geq$'' direction is proved.

Next we prove the ``$\leq$'' direction. By Proposition \ref{lctsemicont}, we know that the function $[\fb]\mapsto\lct(X_{\kappa([\fb])},D_{\kappa([\fb])}; \fb)$ on $H_{k,d}$ is constructible and lower semi-continuous.
Denote by $H_{k,d}^{\mathrm{cl}}$ the set of closed points in $H_{k,d}$. Since the set of closed points are dense in any stratum of $H_{k,d}$ with respect to the $\lct$ function, we have the following equality:
\[
n!\cdot\inf_{d\geq ck^n, ~ [\fb]\in H_{k,d}}
    d\cdot\lct(X_{\kappa([\fb])}, D_{\kappa([\fb])};\fb)^n
    =n!\cdot\inf_{d\geq ck^n, ~ [\fb]\in H_{k,d}^{\mathrm{cl}}}
    d\cdot\lct(X_{\kappa([\fb])}, D_{\kappa([\fb])};\fb)^n
\]
Any $[\fb]\in H_{k,d}^{\mathrm{cl}}$ satisfies that $\kappa([\fb])$ is an algebraic extension of $\bk$. Since $\bK$ is algebraically closed, $\kappa([\fb])$ can be embedded into $\bK$ as a subfield. Hence there exists a point $[\fc]\in H_{k,d}(\bK)$ lying over $[\fb]$. Thus similar arguments implies that $\lct(X_{\bK},D_{\bK};\fc)=\lct(X_{\kappa([\fb])},D_{\kappa([\fb])};\fb)$, and the ``$\leq$'' direction is proved.
\medskip

(2) From (1) we know that $\widehat{\ell_{c,k}}(x_{\bK},X_{\bK},D_{\bK})=\widehat{\ell_{c,k}}(x_{\bar{\bk}},X_{\bar{\bk}},D_{\bar{\bk}})$ for any $c,k$. Hence it follows from Theorem \ref{hvolcolength}.
\end{proof}

The following corollary is well-known to experts.
We present a proof here using normalized volumes.

\begin{cor}
 Let $(Y,E)$ be a log Fano pair over
 a field $\bk$ of characteristic 
 $0$. The following are equivalent:
 \begin{enumerate}[label=(\roman*)]
  \item $(Y_{\bar{\bk}},E_{\bar{\bk}})$ is log K-semistable;
  \item $(Y_{\bK},E_{\bK})$ is log K-semistable
  for some field extension $\bK/\bk$ with $\bK=\overline{\bK}$;
  \item $(Y_{\bK},E_{\bK})$ is log K-semistable
  for any field extension $\bK/\bk$ with $\bK=\overline{\bK}$.
 \end{enumerate}
 We say that $(Y,E)$ is \emph{geometrically log K-semistable} if 
 one (or all) of these conditions holds.
\end{cor}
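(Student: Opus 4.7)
The plan is to translate the log K-semistability condition into a numerical condition on the normalized volume of an affine cone via Theorem \ref{ksscone}, and then invoke the field-extension invariance of normalized volumes established in Proposition \ref{fieldext}(2). The implications $(\mathrm{iii})\Rightarrow(\mathrm{i})\Rightarrow(\mathrm{ii})$ are trivial (take $\bK=\bar{\bk}$), so the real content is that log K-semistability for \emph{some} algebraically closed extension propagates to \emph{every} such extension.

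First I would fix $r\in\bN$ such that $L:=-r(K_Y+E)$ is Cartier on $Y$, and form the affine cone $X=C(Y,L)=\Spec\bigoplus_{m\geq 0}H^0(Y,L^{\otimes m})$ together with its cone vertex $x$ and the $\bQ$-divisor $D$ on $X$ corresponding to $E$. Since $Y$, $E$ and $L$ are defined over $\bk$, the vertex $x$ is a $\bk$-rational point of $X$, and the whole construction commutes with arbitrary base change: for every field extension $\bK/\bk$, $(x_\bK,X_\bK,D_\bK)$ is canonically the affine cone over $(Y_\bK,E_\bK)$ with respect to $L_\bK=-r(K_{Y_\bK}+E_\bK)$. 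The cone singularity $(x\in(X,D))$ is klt (a standard fact for cones over log Fano pairs polarized by a multiple of $-(K_Y+E)$), and this property is preserved by flat base change, so $(x_\bK\in (X_\bK,D_\bK))$ is klt as well, allowing us to apply the normalized volume formalism over every $\bK$.

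For any algebraically closed extension $\bK\supset\bk$, Theorem \ref{ksscone} applied over $\bK$ gives
\[
(Y_\bK,E_\bK)\text{ is log K-semistable}\iff \hvol(x_\bK,X_\bK,D_\bK)=r^{-1}(-K_{Y_\bK}-E_\bK)^{n-1}.
\]
The intersection number on the right is preserved under flat base change of proper schemes, hence equals $r^{-1}(-K_Y-E)^{n-1}$, a quantity independent of $\bK$. Meanwhile Proposition \ref{fieldext}(2), applied at the $\bk$-rational point $x\in X$, yields
\[
\hvol(x_\bK,X_\bK,D_\bK)=\hvol(x_{\bar{\bk}},X_{\bar{\bk}},D_{\bar{\bk}})
\]
for every algebraically closed extension $\bK/\bk$. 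Consequently the truth value of the displayed K-semistability condition for $(Y_\bK,E_\bK)$ depends only on $\bar{\bk}$, giving $(\mathrm{i})\Leftrightarrow(\mathrm{ii})\Leftrightarrow(\mathrm{iii})$. No serious obstacle is expected: all of the substance has been absorbed into Theorem \ref{ksscone} and Proposition \ref{fieldext}(2), and the proof is a formal assembly of those two inputs together with the base-change compatibility of the cone construction and of intersection numbers.
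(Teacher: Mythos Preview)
Your proposal is correct and follows essentially the same route as the paper: build the affine cone $(x\in(X,D))$ over $(Y,E)$, use Theorem \ref{ksscone} to rewrite log K-semistability of $(Y_{\bK},E_{\bK})$ as the equality $\hvol(x_{\bK},X_{\bK},D_{\bK})=r^{-1}(-K_Y-E)^{n-1}$, and then apply Proposition \ref{fieldext}(2) to see that this equality is independent of the choice of algebraically closed extension $\bK/\bk$. Your write-up is somewhat more explicit about base-change compatibility of the cone construction and of intersection numbers, but the argument is the same.
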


\begin{proof}
Let us take the affine cone $X=C(Y,L)$ with $L=-r(K_Y+E)$  Cartier. Let $D$ be the $\bQ$-divisor on $X$ corresponding to $E$. Denote by$x\in X$ the cone vertex of $X$. Let $\bK/\bk$ be a field extension with $\bK=\overline{\bK}$. 
Then  Theorem \ref{ksscone} implies that $(Y_{\bK},E_{\bK})$ is log K-semistable if and only if $\hvol(x_{\bK}, X_{\bK},D_{\bK})=r^{-1}(-K_Y-E)^{n-1}$. Hence the corollary is a consequence of Proposition \ref{fieldext} (2).
\end{proof}

We finish this section with a natural speculation. Suppose 
$x\in (X,D)$ is a klt singularity over a field $\bk$ of characteristic
zero that is not necessarily algebraically closed. The definition of
normalized volume of singularities extend verbatimly to
$x\in (X,D)$ which we also denote by $\hvol(x,X,D)$. Then
we expect $\hvol(x,X,D)=\hvol(x_{\bar{\bk}},X_{\bar{\bk}}, D_{\bar{\bk}})$,
i.e. normalized volumes are stable under base change to algebraic
closures. Such a speculation should be a consequence of the \emph{Stable
Degeneration Conjecture (SDC)} stated in \cite[Conjecture 7.1]{li15a}
and \cite[Conjecture 1.2]{lx17} which roughly says that a $\hvol$-minimizing
valuation $v_{\min}$ over $x_{\bar{\bk}}\in (X_{\bar{\bk}}, D_{\bar{\bk}})$ is unique and quasi-monomial,
so $v_{\min}$ is invariant under the action of $\mathrm{Gal}(\bar{\bk}/\bk)$
and hence has the same normalized volume as its restriction
to $x\in (X,D)$.

\section{Proofs and applications}

\subsection{Proofs}\label{sec_proofs}
The following theorem is a stronger result that implies Theorem \ref{mainthm}.

\begin{thm}\label{weaksc}
Let $\pi:(\cX,\cD)\to T$ together with a section $\sigma: T\to \cX$ be a $\bQ$-Gorenstein flat family of klt singularities over a field $\bk$ of characteristic $0$.
Then for any point $o\in T$, there exists an intersection
$U$ of countably many Zariski open neighborhoods of $o$, 
such that $\hvol(\sigma(\bar{t}),\cX_{\bar{t}},\cD_{\bar{t}})\geq\hvol(\sigma(\bar{o}),\cX_{\bar{o}},\cD_{\bar{o}})$
for any point $t\in U$. In particular, if $t$ is a generalization
of $o$ then $\hvol(\sigma(\bar{t}),\cX_{\bar{t}},\cD_{\bar{t}})\geq\hvol(\sigma(\bar{o}),\cX_{\bar{o}},\cD_{\bar{o}})$.
\end{thm}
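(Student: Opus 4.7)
The plan is to combine Theorem~\ref{hvolcolength} (the equality $\widehat{\ell_{c,\infty}} = \hvol$ for small $c$) with Zariski lower semi-continuity of $t \mapsto \widehat{\ell_{c,k}}(t)$ on $T$ for each fixed $c > 0$ and $k \in \bN$. The countable intersection $U$ arises naturally because $\widehat{\ell_{c,\infty}} = \liminf_k \widehat{\ell_{c,k}}$ is itself a countable $\liminf$, and because $c$ must be varied through a sequence $c_n \to 0$ in order to eventually fall below the constant $c_0(\sigma(\bar{t}),\cX_{\bar{t}},\cD_{\bar{t}})$ at every $t \in U$. By Proposition~\ref{fieldext}(2) we may pass to geometric fibers throughout, and write $\hvol(t) := \hvol(\sigma(\bar{t}),\cX_{\bar{t}},\cD_{\bar{t}})$ and $\widehat{\ell_{c,k}}(t)$ analogously.

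\textit{Lower semi-continuity of the normalized colength.} For fixed $c > 0$ and $k \in \bN$, I will show that $t \mapsto \widehat{\ell_{c,k}}(t)$ is Zariski lower semi-continuous on $T$. For each integer $d \geq c k^n$, let $\cH_{k,d} \to T$ be the relative Hilbert scheme of length-$d$ quotients of $\cO_{\cX}/\cI^k$, where $\cI$ is the ideal sheaf of $\sigma(T)$. Since $V(\cI^k) \to T$ is finite, $\cH_{k,d}$ is proper over $T$, and the universal ideal $\cF$ on $\cX \times_T \cH_{k,d}$ satisfies that $V(\cF)$ is finite over $\cH_{k,d}$. Via Proposition~\ref{fieldext}(1),
\[
\widehat{\ell_{c,k}}(t) \;=\; n! \cdot \inf_{d \geq c k^n} d \cdot \inf_{h \in (\cH_{k,d})_{\bar{t}}} \lct\bigl(\cX_{\kappa(h)},\cD_{\kappa(h)};\cF_h\bigr)^n.
\]
Applying Proposition~\ref{lctsemicont} to the pullback family over a normalization of each irreducible component of $\cH_{k,d}$ yields lower semi-continuity of $h \mapsto \lct(\cX_{\kappa(h)},\cD_{\kappa(h)};\cF_h)$; properness of $\cH_{k,d}/T$ (closed maps send closed sets to closed sets) transfers this to lower semi-continuity of the fiberwise infimum as a function of $t$. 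The outer infimum over $d$ is effectively finite in a Zariski neighborhood of $o$, since $t \mapsto \ell(\cO_{\sigma(\bar{t}),\cX_{\bar{t}}}/\fm^k)$ is upper semi-continuous and thus bounded near $o$; a finite infimum of lower semi-continuous functions is again lower semi-continuous.

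\textit{Diagonal argument.} Let $c_0 := c_0(\sigma(\bar{o}),\cX_{\bar{o}},\cD_{\bar{o}})$ be as in Theorem~\ref{hvolcolength}, and set $c_n := c_0/n$. By the LSC just established, for each pair $(n,k)$ there is a Zariski open neighborhood $U_{n,k}$ of $o$ such that
\[
\widehat{\ell_{c_n,k}}(t) \;\geq\; \widehat{\ell_{c_n,k}}(o) - 1/n \qquad \text{for all } t \in U_{n,k}.
\]
Let $U := \bigcap_{n,k \geq 1} U_{n,k}$, a countable intersection of Zariski open neighborhoods of $o$. For $t \in U$, taking $\liminf_{k \to \infty}$ yields
\[
\widehat{\ell_{c_n,\infty}}(t) \;\geq\; \widehat{\ell_{c_n,\infty}}(o) - 1/n \;=\; \hvol(o) - 1/n,
\]
where the last equality comes from $c_n \leq c_0$ together with Theorem~\ref{hvolcolength}. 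Since $c_n \to 0$ and $c_0(\sigma(\bar{t}),\cX_{\bar{t}},\cD_{\bar{t}}) > 0$, for $n$ sufficiently large we have $\widehat{\ell_{c_n,\infty}}(t) = \hvol(t)$; letting $n \to \infty$ gives $\hvol(t) \geq \hvol(o)$. The statement for a generalization $t$ of $o$ follows because every such $t$ lies in every Zariski open neighborhood of $o$, and hence in $U$.

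\textit{Main obstacle.} The principal technical hurdle is establishing the lower semi-continuity of $\widehat{\ell_{c,k}}$ in $T$: this requires isolating the correct relative Hilbert scheme, verifying its properness, and pulling Proposition~\ref{lctsemicont} back through a normalization of a potentially non-normal, reducible base. Once this LSC property is in hand, the ``double'' diagonal bookkeeping over the two independent countable parameters $n$ and $k$, combined with Theorem~\ref{hvolcolength}, delivers the conclusion essentially formally.
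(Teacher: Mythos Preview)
Your proposal is correct and follows essentially the same route as the paper: Hilbert schemes $\cH_{k,d}$ of the thickened section, normalization to apply Proposition~\ref{lctsemicont}, properness to push the fiberwise infimum down to $T$, and then a countable intersection over the parameters $k$ and $c$ combined with Theorem~\ref{hvolcolength}. The only cosmetic difference is that the paper records that $t\mapsto\widehat{\ell_{c,k}}(t)$ is not just lower semi-continuous but \emph{constructible} (the lct function takes finitely many rational values on the finite-type Hilbert scheme), which lets one take $U_{k,m}=\{t:\widehat{\ell_{1/m,k}}(t)\geq\widehat{\ell_{1/m,k}}(o)\}$ directly without your $-1/n$ slack; your version with the error term is of course equally valid after passing to the limit.
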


\begin{proof}
Let $\cZ_k\to T$ be the $k$-th thickening of the section $\sigma$,
i.e. $\cZ_{k,t}=\Spec(\cO_{\cX_t}/\fm_{\sigma(t),\cX_t}^k)$. 
Let $d_k:=\max_{t\in T}\ell(\cO_{\sigma(t),\cX_t}/\fm_{\sigma(t),\cX_t}^k)$.
For any $d\in\bN$,
denote $\cH_{k,d}:=\mathrm{Hilb}_{d}(\cZ_k/T)$.
Since $\cZ_k$ is proper over $T$, we know that
$\cH_{k,d}$ is also proper over $T$. Let $\cH_{k,d}^{\mathrm{n}}$ be the normalization of $\cH_{k,d} $. Denote by $\tau_{k,d}:\cH_{k,d}\to T$. After pulling back the 
universal ideal sheaf on $\cX\times_T\cH_{k,d}$ over $\cH_{k,d}$ to $\cH_{k,d}^{\mathrm{n}}$,
we obtain an ideal sheaf $\fb_{k,d}$ on $\cX\times_T\cH_{k,d}^{\mathrm{n}}$.
Denote by $\pi_{k,d}: (\cX\times_T\cH_{k,d}^{\mathrm{n}}, \cD\times_T \cH_{k,d}^{\mathrm{n}})\to \cH_{k,d}^{\mathrm{n}}$
the projection, then $\pi_{k,d}$ provides a $\bQ$-Gorenstein flat family of klt pairs.

Following the notation of Proposition \ref{fieldext}, assume $h$ is 
scheme-theoretic point of $\cH_{k,d}^{\mathrm{n}}$ lying over 
$[\fb]\in\cH_{k,d}$. Denote by $t=\tau_{k,d}([\fb])\in T$. By 
construction, the ideal sheaf $\fb_{k,d,h}$ on $\cX\times_{T}
\Spec(\kappa(h))$ is the pull back of  $\fb$ under the flat base 
change $\Spec(\kappa(h))\to\Spec(\kappa([\fb]))$. Hence 
$$\lct((\cX,\cD)\times_{T}\Spec(\kappa(h));\fb_{k,d,h})=
\lct((\cX,\cD)\times_{T}\Spec(\kappa([\fb]));\fb).$$
For simplicity, we abbreviate the above equation to $\lct(\fb_{k,d,h})=\lct(\fb)$.
Applying Proposition \ref{lctsemicont} to the family $\pi_{k,d}$ and the ideal $\fb_{k,d}$ implies that the function $\Phi^{\mathrm{n}}:\cH_{k,d}^{\mathrm{n}}\to\bR_{>0}$ defined as
$\Phi^{\mathrm{n}}(h):=\lct(\fb_{k,d,h})$ is constructible and lower semi-continuous with respect to the Zariski topology on $\cH_{k,d}^{\mathrm{n}}$. Since $\lct(\fb_{k,d,h})=\lct(\fb)$, $\Phi^{\mathrm{n}}$ descend to a function $\Phi$ on $\cH_{k,d}$ as $\Phi([\fb]):=\lct(\fb)$. Since $\cH_{k,d}$ is proper over $T$, we know that the function $\phi:T\to \bR_{>0}$ defined as
\[
\phi( t):=n!\cdot\min_{\substack{ck^n\leq d\leq d_k\\ [\fb]\in\tau_{k,d}^{-1}(t)}}\Phi([\fb])^n
\]
is constructible and lower semi-continuous with respect to the Zariski topology on $T$. Then Proposition \ref{fieldext} implies $\phi(t)=\widehat{\ell_{c,k}}(\sigma(\bar{t}), \cX_{\bar{t}},D_{\bar{t}})$. Thus we conclude that $t\mapsto\widehat{\ell_{c,k}}(\sigma(\bar{t}), \cX_{\bar{t}},D_{\bar{t}})$ is constructible and lower semi-continuous with respect to the Zariski topology on $T$. Hence for any $k,m\in\bN$ there exists a Zariski open neighborhood $U_{k,m}$ of $o$, such that 
\[
\widehat{\ell_{1/m,k}}(\sigma(\bar{t}), \cX_{\bar{t}},D_{\bar{t}})\geq \widehat{\ell_{1/m,k}}(\sigma(\bar{o}), \cX_{\bar{o}},D_{\bar{o}})
\quad\textrm{ whenever }t\in U_{k,m}. 
\]
Let $U:=\cap_{k,m} U_{k,m}$, then for any $m\in\bN$ and any $t\in U$ we have
$\widehat{\ell_{1/m,\infty}}(\sigma(\bar{t}), \cX_{\bar{t}},D_{\bar{t}})\geq \widehat{\ell_{1/m,\infty}}(\sigma(\bar{o}), \cX_{\bar{o}},D_{\bar{o}})$. By Theorem \ref{hvolcolength}, for any $t\in U$ we have
\begin{align*}
\hvol(\sigma(\bar{t}),\cX_{\bar{t}},D_{\bar{t}}) & =\lim_{m\to\infty}\widehat{\ell_{1/m,\infty}}(\sigma(\bar{t}), \cX_{\bar{t}},D_{\bar{t}})\\&\geq \lim_{m\to\infty}\widehat{\ell_{1/m,\infty}}(\sigma(\bar{o}), \cX_{\bar{o}},D_{\bar{o}})\\ &=\hvol(\sigma(\bar{o}), \cX_{\bar{o}},D_{\bar{o}}).
\end{align*}
The proof is finished.
\end{proof}

The following theorem is a stronger result that implies
Theorem \ref{openk}.

\begin{thm}\label{openkgen}
Let $\varphi:(\cY,\cE)\to T$ be a $\bQ$-Gorenstein flat 
family of log Fano pairs over a field $\bk$ of
characteristic $0$. Assume that some geometric fiber 
$(\cY_{\bar{o}},\cE_{\bar{o}})$ is log K-semistable 
for a point $o\in T$. Then
\begin{enumerate}
\item There exists an intersection
$U$ of countably many Zariski open neighborhoods of $o$,
such that $(\cY_{\bar{t}},\cE_{\bar{t}})$ is log
K-semistable for any  point $t\in T$. If, in addition,
$\bk=\bar{\bk}$ is uncountable, then $(\cY_{t},\cE_{t})$ 
is log K-semistable for a very general closed point $t\in T$.
\item The geometrically log K-semistable locus $$T^{\textrm{K-ss}}:=\{t\in T\colon (\cY_{\bar{t}},\cE_{\bar{t}})\textrm{ is log K-semistable}\}$$ is stable under generalization.
\end{enumerate}
\end{thm}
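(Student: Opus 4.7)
The plan is to reduce this to Theorem \ref{weaksc} (weak lower semi-continuity of normalized volumes) via the affine cone characterization of K-semistability given in Theorem \ref{ksscone}. First I would promote the cone construction to the relative setting: choose $r\in\bN$ such that $\cL:=-r(K_{\cY/T}+\cE)$ is a relatively very ample Cartier divisor, and form the relative affine cone
\[
\pi:\cX:=\mathrm{Spec}_T\bigoplus_{m\geq 0}\varphi_*\cL^{\otimes m}\to T
\]
together with its zero-section $\sigma:T\to\cX$ and the $\bQ$-divisor $\cD$ on $\cX$ corresponding to $\cE$. Standard facts on cone constructions give that $\pi:(\cX,\cD)\to T$ with the section $\sigma$ is a $\bQ$-Gorenstein flat family of klt singularities, with geometric fibers being the affine cones $(\sigma(\bar{t})\in C(\cY_{\bar{t}},\cL_{\bar{t}}),\cD_{\bar{t}})$ of the log Fano pair fibers.

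Next, Theorem \ref{ksscone} supplies, for each $t\in T$, the universal upper bound
\[
\hvol(\sigma(\bar{t}),\cX_{\bar{t}},\cD_{\bar{t}})\leq r^{-1}(-K_{\cY_{\bar{t}}}-\cE_{\bar{t}})^{n-1},
\]
with equality if and only if $(\cY_{\bar{t}},\cE_{\bar{t}})$ is log K-semistable. The right-hand side is locally constant on $T$ because in a flat projective family with $\bQ$-Cartier relative anticanonical class, top self-intersection numbers are locally constant on the base; since $T$ is an irreducible variety, the right-hand side is in fact a constant $C$ independent of $t$. By the hypothesis that $(\cY_{\bar{o}},\cE_{\bar{o}})$ is log K-semistable, equality holds at $o$, namely $\hvol(\sigma(\bar{o}),\cX_{\bar{o}},\cD_{\bar{o}})=C$.

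I then apply Theorem \ref{weaksc} to the family $\pi$ and the section $\sigma$. It produces a countable intersection $U$ of Zariski-open neighborhoods of $o$ such that $\hvol(\sigma(\bar{t}),\cX_{\bar{t}},\cD_{\bar{t}})\geq C$ for every $t\in U$; combined with the Theorem \ref{ksscone} upper bound, equality holds, and hence $(\cY_{\bar{t}},\cE_{\bar{t}})$ is log K-semistable for every $t\in U$. When $\bk=\bar{\bk}$ is uncountable, a countable intersection of Zariski-open dense subsets of a variety contains a very general closed point, giving the second assertion of (1). For (2), the last sentence of Theorem \ref{weaksc} states that whenever $t'$ is a generalization of $t$ one has $\hvol(\sigma(\bar{t}'),\cX_{\bar{t}'},\cD_{\bar{t}'})\geq\hvol(\sigma(\bar{t}),\cX_{\bar{t}},\cD_{\bar{t}})$; applied with $t\in T^{\mathrm{K\text{-}ss}}$, this pinches $\hvol$ at $t'$ to the constant $C$, so $t'\in T^{\mathrm{K\text{-}ss}}$ as well.

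The only genuine subtlety lies in setting up the relative cone: one must verify that $\cX$ is normal, that $K_{\cX/T}+\cD$ is $\bQ$-Cartier, and that each geometric fiber is a klt singularity of the expected form. These are standard but deserve care, and I would follow the template of the cone computations in \cite{li15b, ll16, lx16}, which extend naturally to relative settings, together with inversion of adjunction and the fact that affine cones over klt log Fano pairs are klt. Once these are in place, the rest of the argument is formal: weak lower semi-continuity from Theorem \ref{weaksc} and the universal upper bound from Theorem \ref{ksscone} together force the normalized volume to sit at its extremal value throughout $U$ and along every generalization of a log K-semistable point.
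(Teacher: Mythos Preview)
Your proposal is correct and follows essentially the same approach as the paper: form the relative affine cone to obtain a $\bQ$-Gorenstein flat family of klt singularities, use Theorem~\ref{ksscone} to identify log K-semistability with the normalized volume attaining the constant upper bound $r^{-1}(-K_{\cY_{\bar t}}-\cE_{\bar t})^{n-1}$, and then apply Theorem~\ref{weaksc}. The only cosmetic differences are that the paper cites \cite[Section~3.1]{kol13} for the relative cone being a $\bQ$-Gorenstein flat family of klt singularities, and for part~(2) argues via ``a generalization lies in every Zariski open neighborhood'' rather than invoking the last sentence of Theorem~\ref{weaksc} directly---these amount to the same thing.
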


\begin{proof}
(1) For $r\in\bN$ satisfying $\cL=-r(K_{\cY/T}+\cE)$
is Cartier, we define the \emph{relative affine cone} $\cX$ of $(\cY,\cL)$ by
\[
 \cX:=\Spec_{T}\oplus_{m\geq 0}\varphi_*(\cL^{\otimes m}).
\]
Assume $r$ is sufficiently large, then it is easy to see that $\varphi_*(\cL^{\otimes m})$
is locally free on $T$ for all $m\in\bN$. Thus we have 
$\cX_t\cong \Spec\oplus_{m\geq 0}H^0(\cY_t,\cL_t^{\otimes m}):=C(\cY_t,\cL_t)$.
Let $\cD$ be the $\bQ$-divisor on $\cX$ corresponding to $\cE$. 
By \cite[Section 3.1]{kol13}, the projection $\pi:(\cX,\cD)\to T$ together
with the section of cone vertices $\sigma:T\to\cX$ is
a $\bQ$-Gorenstein flat family of klt singularities.

Since $(\cY_{\bar{o}},\cE_{\bar{o}})$ is K-semistable, Theorem \ref{ksscone} implies
$$\hvol(\sigma(\bar{o}),\cX_{\bar{o}}, \cD_{\bar{o}})=r^{-1}(-K_{\cY_{\bar{o}}}-\cE_{\bar{o}})^{n-1}.$$
Then by Theorem \ref{weaksc}, there exists an intersection $U$ of countably many Zariski open neighborhoods of $o$, such that $\hvol(\sigma(\bar{t}),\cX_{\bar{t}}, \cD_{\bar{t}})\geq \hvol(\sigma(\bar{o}),\cX_{\bar{o}}, \cD_{\bar{o}})$ for any $t\in U$. Since the global volumes of log Fano pairs are constant in $\bQ$-Gorenstein flat families, we have
\[
\hvol(\sigma(\bar{t}),\cX_{\bar{t}}, \cD_{\bar{t}})\geq \hvol(\sigma(\bar{o}),\cX_{\bar{o}}, \cD_{\bar{o}})=
r^{-1}(-K_{\cY_{\bar{o}}}-\cE_{\bar{o}})^{n-1}=r^{-1}(-K_{\cY_{\bar{t}}}-\cE_{\bar{t}})^{n-1}.
\]
Then Theorem \ref{ksscone} implies that $(\cY_{\bar{t}},\cE_{\bar{t}})$ is K-semistable for any $t\in U$.
\medskip

(2) Let $o\in T^{\textrm{K-ss}}$ be a scheme-theoretic point. Then by Theorem \ref{openkgen} there exists countably many Zariski open neighborhoods $U_i$ of $o$ such that $\cap_i U_i\subset T^{\textrm{K-ss}}$.
If $t$ is a generalization of $o$, then $t$ belongs to all Zariski open neighborhoods of $o$, so $t\in T^{\textrm{K-ss}}$.
\end{proof}

\begin{proof}[Proof of Theorem \ref{openk}]
It is clear that (1) and (2) follows from Theorem \ref{openkgen}.
For (3), we only need to replace Theorem \ref{weaksc} by Conjecture 
\ref{mainconj}, then the same argument in the proof of Theorem \ref{openkgen} (1)
works.
\end{proof}

The following corollary is a stronger result that implies
Corollary \ref{specialdeg}.

\begin{cor}
 Let $\pi:(\cY,\cE)\to T$ be a $\bQ$-Gorenstein family of complex log Fano pairs. Assume that $\pi$ is isotrivial over a Zariski open subset $U\subset T$, and $(\cY_o,\cE_o)$ is log K-semistable for a closed point $o\in T\setminus U$. Then $(\cY_t,\cE_t)$ is log K-semistable for any $t\in U$.
\end{cor}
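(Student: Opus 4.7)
The plan is to reduce the statement to the generic fiber via Theorem \ref{openkgen}(2) and then propagate K-semistability to every fiber over $U$ using isotriviality. Since $T$ is an irreducible normal variety, its generic point $\eta$ is a generalization of every scheme-theoretic point, in particular of $o$. The hypothesis gives $o \in T^{\textrm{K-ss}}$, so Theorem \ref{openkgen}(2) yields $\eta \in T^{\textrm{K-ss}}$; equivalently, the geometric generic fiber $(\cY_{\bar\eta}, \cE_{\bar\eta})$ is log K-semistable.

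Next I would unpack isotriviality. By assumption, there is a finite \'etale cover $\rho: U'\to U$ together with a log Fano pair $(Y_0, E_0)$ over $\bC$ and an isomorphism of families
\[
(\cY, \cE)\times_T U' \;\cong\; (Y_0, E_0)\times_\bC U'.
\]
Pulling this back to the generic point of $U'$ (which lies over $\eta \in U$) produces an isomorphism $(\cY_{\bar\eta}, \cE_{\bar\eta}) \cong (Y_0, E_0)_{\overline{\bC(T)}}$ of log Fano pairs over $\overline{\bC(T)}$. The Corollary following Proposition \ref{fieldext} (equivalence of log K-semistability across algebraically closed field extensions) then forces $(Y_0, E_0)$ itself to be log K-semistable over $\bC$.

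Finally, the same \'etale trivialization produces, for every $t \in U$, an isomorphism $(\cY_{\bar t}, \cE_{\bar t}) \cong (Y_0, E_0)_{\overline{\kappa(t)}}$, so a further application of the field-extension corollary gives log K-semistability of $(\cY_t, \cE_t)$. No step poses a real obstacle: the whole argument is essentially bookkeeping, combining Theorem \ref{openkgen} with the observation that isotriviality identifies every geometric fiber over $U$ with a single model pair $(Y_0, E_0)$. The mildest point to verify is the étale trivialization at the generic point, which follows from the fact that $\rho$ is surjective on generic points, so that the pair $(Y_0, E_0)$ extracted from $U'$ is canonically the one controlling $(\cY_{\bar\eta}, \cE_{\bar\eta})$.
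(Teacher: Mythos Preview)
Your argument is correct and close in spirit to the paper's, but it pivots on a different part of Theorem \ref{openkgen}. The paper invokes part (1): over $\bC$ (uncountable, algebraically closed) the K-semistable locus contains a very general closed point, hence meets the non-empty open set $U$; isotriviality then promotes K-semistability from one closed point of $U$ to all of $U$ in a single stroke. You instead use part (2), pushing K-semistability up to the generic point $\eta\in U$, and then unwind isotriviality via an \'etale trivialization and the field-extension corollary to descend to the model pair $(Y_0,E_0)$ and back to each fiber. Your route is a bit more explicit about the mechanism of isotriviality and avoids any appeal to ``very general'' points (so it would work verbatim over a countable base field), at the cost of some extra bookkeeping; the paper's route is terser because it only needs the bare fact that isotrivial fibers share K-semistability. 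One small remark: you commit to a specific meaning of ``isotrivial'' (trivial after finite \'etale base change); if one only assumes the weaker condition that all geometric fibers over $U$ are abstractly isomorphic, your argument still goes through---and indeed simplifies---since the comparison $(\cY_{\bar\eta},\cE_{\bar\eta})\cong(\cY_{\bar t},\cE_{\bar t})$ is then immediate.
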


\begin{proof}
Since $(\cY_o,\cE_o)$ is log K-semistable, Theorem \ref{openkgen} implies that $(\cY_t,\cE_t)$ is log K-semistable for very general closed point $t\in T$. Hence there exists (hence any) $t\in U$ such that $(\cY_t,\cE_t)$ is log K-semistable.
\end{proof}

\subsection{Applications}\label{sec_appl}
In this section we present applications of Theorem \ref{mainthm}. The
following theorem generalizes the inequality part of \cite[Theorem A.4]{liux17}.
\begin{thm}\label{maxhvol}
Let $x\in (X,D)$ be a complex klt singularity of dimension $n$. Let
$a$ be the largest coefficient of components of $D$ containing $x$. Then
$\hvol(x,X,D)\leq (1-a)n^n$.
\end{thm}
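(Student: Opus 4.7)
The plan is to reduce the inequality to a direct computation on the standard model $(0\in\mathbb{A}^n, aH_0)$, where $H_0=\{x_1=0\}$ is a coordinate hyperplane, and then propagate the bound by Theorem~\ref{mainthm}.

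First, by the monotonicity of $\hvol$ in the boundary—the pointwise inequality $A_{(X,D)}(v)\le A_{(X,D')}(v)$ whenever $D\ge D'$ gives immediately $\hvol(x,X,D)\le\hvol(x,X,D')$—it suffices to treat the case $D=aD_0$ for a single prime divisor $D_0\ni x$ of coefficient $a$.

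Second, for the standard model $(0\in\mathbb{A}^n, aH_0)$, consider the monomial valuation $v\in\Val_{\mathbb{A}^n,0}$ with weights $\bigl(\tfrac{1}{1-a},1,\ldots,1\bigr)$ on the coordinates $(x_1,\ldots,x_n)$. Then
\[
A_{(\mathbb{A}^n, aH_0)}(v)=\sum_i w_i-a\,v(H_0)=\tfrac{1}{1-a}+(n-1)-\tfrac{a}{1-a}=n,\qquad \vol(v)=\prod_i w_i^{-1}=1-a,
\]
so $\hvol_{(\mathbb{A}^n,aH_0)}(v)=n^n(1-a)$. By Theorem~\ref{eqhvol} this proves $\hvol(0,\mathbb{A}^n,aH_0)\le (1-a)n^n$.

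Third, to extend the bound to a general klt singularity $(x,X,aD_0)$, the idea is to apply Theorem~\ref{mainthm} to a $\bQ$-Gorenstein flat family $\pi:(\cX,\cD)\to T$ with section $\sigma$ such that the fiber at some closed point $o\in T$ is $(x,X,aD_0)$ while a very general geometric fiber is isomorphic to the standard model $(0,\mathbb{A}^n,aH_0)$. Combined with Proposition~\ref{fieldext} to handle base change to $\overline{\kappa(t)}$, this would yield
\[
\hvol(x,X,aD_0)\le \hvol(0,\mathbb{A}^n,aH_0)\le(1-a)n^n.
\]

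\textbf{The main obstacle} is producing the family in the third step, since an arbitrary klt germ $(x\in X)$ need not be a flat specialization of a smooth germ. To bypass this, one would work with a log resolution $\mu:Y\to X$ of $(X,D_0)$ at $x$ and, at a smooth point $p\in Y$ lying on the transverse intersection of an exceptional divisor $E$ over $x$ and the strict transform $\tilde D_0$ of $D_0$, consider the two-parameter quasi-monomial valuation $v_{w_1,w_2}$ with weights $(w_1,w_2)$ along $(E,\tilde D_0)$. The log discrepancy is explicit, $A_{(X,aD_0)}(v_{w_1,w_2})=w_1(A_E-a\,m_E)+w_2(1-a)$ with $A_E=A_{X,0}(\ord_E)$ and $m_E=\ord_E(\mu^*D_0)$, while $\vol_X(v_{w_1,w_2})$ is computed via the Newton–Okounkov body associated to the resolution. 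Optimizing the weights $(w_1,w_2)$ mirrors the model computation and produces a valuation directly witnessing $\hvol(x,X,aD_0)\le(1-a)n^n$ via Theorem~\ref{eqhvol}, sidestepping the need for the auxiliary family. The case $a=0$ recovers the inequality part of \cite[Theorem A.4]{liux17}.
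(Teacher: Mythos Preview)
Your model computation in the second step is correct and matches the paper's exactly. The rest of the argument, however, has genuine gaps.

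First, a minor point: the monotonicity reduction in your first step requires $(X,aD_0)$ to be a klt pair, hence $K_X+aD_0$ to be $\bQ$-Cartier. This is not automatic from $K_X+D$ being $\bQ$-Cartier. The paper avoids this issue by keeping the full boundary $D$ throughout.

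The main problem is your third step. You correctly identify that an arbitrary klt germ need not flatly degenerate to a smooth one, so the family you want does not exist in general. Your proposed bypass via quasi-monomial valuations on a log resolution is not a proof as written. A two-parameter valuation $v_{w_1,w_2}$ along $(E,\tilde D_0)$ at $p\in Y$ is, in dimension $n>2$, centered along a positive-dimensional locus of $Y$ and has infinite volume on $X$; to get a valuation centered at $x$ with finite volume you must assign weights to all $n$ local coordinates at $p$, and then $\vol_X(v)$ genuinely depends on the geometry of the resolution, not just on the weights. The sentence ``optimizing the weights mirrors the model computation'' is exactly where the difficulty hides, and nothing in your sketch controls $\vol_X(v)$.

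The paper's idea is different and much simpler: do not deform $X$ at all. Keep the pair $(X,D)$ fixed and move the \emph{marked point} along the divisor. Concretely, let $D_i$ be a component of $D$ through $x$ with coefficient $a$, and apply Theorem~\ref{mainthm} to the constant family
\[
\mathrm{pr}_2:(X\times D_i^{\mathrm{n}},\,D\times D_i^{\mathrm{n}})\longrightarrow D_i^{\mathrm{n}}
\]
with the diagonal section $\sigma:D_i^{\mathrm{n}}\to X\times D_i^{\mathrm{n}}$. Every fiber is literally $(X,D)$, so the family is trivially $\bQ$-Gorenstein flat; only the section varies along $D_i$. Theorem~\ref{mainthm} (with $o$ a preimage of $x$) gives $\hvol(x,X,D)\le\hvol(y,X,D)$ for a very general $y\in D_i$. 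Choosing $y$ smooth in both $X$ and $\Supp(D)$ forces the germ $(y\in(X,D))$ to be \'etale-locally $(0\in(\bA^n,a\bA^{n-1}))$, and your second-step computation finishes.
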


\begin{proof}
Suppose $D_i$ is the component of $D$ containing $x$ with
coefficient $D$. Let $D_i^{\mathrm{n}}$ be the normalization
of $D_i$. By applying Theorem \ref{mainthm} to $\mathrm{pr}_2:
(X\times D_i^{\mathrm{n}}, D\times D_i^{\mathrm{n}})\to 
D_i^{\mathrm{n}}$ together with the natural diagonal section
$\sigma:D_i^{\mathrm{n}}\to X\times D_i^{\mathrm{n}}$, we have that
$\hvol(x,X,D)\leq\hvol(y, X, D)$ for a very general closed point 
$y\in D_i$. We may pick $y$ to be a smooth point in 
both $X$ and $D$, then $\hvol(x,X,D)\leq \hvol(0,\bA^n,a \bA^{n-1})$ 
where $\bA^{n-1}$ is a coordinate hyperplane of $\bA^n$. Let us take
local coordinates $(z_1,\cdots,z_n)$ of $\bA^n$ such that $\bA^{n-1}=V(z_1)$.
Then the monomial valuation $v_a$ on $\bA^n$ with weights $((1-a)^{-1},1,\cdots,1)$
satisfies $A_{\bA^n}(v)=\frac{1}{1-a}+(n-1)$, $\ord_{v_a}(\bA^{n-1})=\frac{1}{1-a}$
and $\vol(v_a)=(1-a)$. Hence
\[
 \hvol(x,X,D)\leq \hvol_{0,(\bA^n,a\bA^{n-1})}(v_a)=(A_{\bA^n}(v)-a\ord_{v_a}(\bA^{n-1}))^n
 \cdot\vol(v_a)=(1-a)n^n.
\]
The proof is finished.
\end{proof}

\begin{thm}
Let $(X,D)$ be a klt pair over $\bC$. Let $Z$ be an irreducible subvariety of $X$. Then for a very general closed point $z\in Z$ we have
\[
\hvol(z,X,D)=\sup_{x\in Z}\hvol(x, X,D).
\]
In particular, there exists a countable intersection $U$ of non-empty Zariski open subsets of $Z$ such that $\hvol(\cdot, X,D)|_U$ is constant. \end{thm}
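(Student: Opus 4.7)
The plan is to apply Theorem \ref{weaksc} to the trivial family obtained by base changing $(X,D)$ to the normalization of $Z$ with a diagonal-type section, and then extract a very general constancy locus by exploiting that the normalized colength functions are constructible. Let $\nu : Z^{\mathrm{n}} \to Z$ denote the normalization, and consider
\[
\pi : (\cX,\cD) := (X \times Z^{\mathrm{n}},\, D \times Z^{\mathrm{n}}) \to Z^{\mathrm{n}},
\qquad
\sigma(z) := (\nu(z), z).
\]
One checks directly that this is a $\bQ$-Gorenstein flat family of klt singularities over the normal base $Z^{\mathrm{n}}$ (normality of $X \times Z^{\mathrm{n}}$ and the $\bQ$-Cartierness of $K_{\cX/Z^{\mathrm{n}}}+\cD$ are pulled back from $X$), and $\hvol(\sigma(z), \cX_z, \cD_z) = \hvol(\nu(z), X, D)$ for every closed point $z \in Z^{\mathrm{n}}$.

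Next I would invoke the key intermediate step from the proof of Theorem \ref{weaksc}: for every $(c,k) \in \bQ_{>0} \times \bN$, the function $g_{c,k}(z) := \widehat{\ell_{c,k}}(\sigma(\bar z), \cX_{\bar z}, \cD_{\bar z})$ on $Z^{\mathrm{n}}$ is constructible and Zariski lower semi-continuous. Hence $g_{c,k}$ takes only finitely many values, and attains its supremum $M_{c,k}$ on a non-empty Zariski open $V_{c,k} \subset Z^{\mathrm{n}}$. Set $W := \bigcap_{(c,k)} V_{c,k}$; this is a countable intersection of non-empty Zariski opens in the irreducible $\bC$-variety $Z^{\mathrm{n}}$, and is therefore dense.

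To transfer the statement to $\hvol$, I would pair $W$ with Theorem \ref{hvolcolength}. For any two closed $z_1, z_2 \in W$, pick $c$ smaller than both thresholds $c_0(\nu(z_i))$ given by that theorem; then $g_{c,k}(z_i) = M_{c,k}$ for all $k$, so
\[
\hvol(\nu(z_1),X,D) = \liminf_{k\to\infty} M_{c,k} = \hvol(\nu(z_2),X,D),
\]
showing that $\hvol(\nu(\cdot),X,D)$ is a constant $M_\infty$ on closed points of $W$. Applied to any other closed $z' \in Z^{\mathrm{n}}$ with some fixed $z_0 \in W$ and $c \leq \min(c_0(\nu(z')), c_0(\nu(z_0)))$, the inequality $g_{c,k}(z') \leq M_{c,k}$ yields $\hvol(\nu(z'),X,D) \leq M_\infty$. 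Since $\nu$ is surjective on closed points, this identifies $M_\infty$ with $\sup_{x \in Z}\hvol(x,X,D)$.

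Finally, to express the constancy locus as a countable intersection of opens in $Z$ itself, I would restrict to the normal locus $Z^{\circ}\subset Z$, over which $\nu$ is an isomorphism, and take
\[
U := \nu\!\left(W \cap \nu^{-1}(Z^{\circ})\right) = \bigcap_{(c,k)} \nu\!\left(V_{c,k} \cap \nu^{-1}(Z^{\circ})\right),
\]
which is a countable intersection of non-empty Zariski opens of $Z$ on which $\hvol(\cdot,X,D)$ is identically $M_\infty$. The only subtle bookkeeping is the base-point dependence of the threshold $c_0(\cdot)$ in Theorem \ref{hvolcolength}, which is handled by working with the countable family $\{g_{c,k}\}$ rather than any single $(c,k)$; no genuinely new technical obstacle arises beyond the machinery already established in the preceding sections.
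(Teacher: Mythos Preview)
Your setup---the trivial family $\mathrm{pr}_2:(X\times Z^{\mathrm{n}},D\times Z^{\mathrm{n}})\to Z^{\mathrm{n}}$ with the diagonal section---is exactly the one the paper uses, and your argument is correct. The difference is only in how the conclusion is extracted. The paper applies Theorem~\ref{mainthm} (equivalently Theorem~\ref{weaksc}) as a black box: for each closed point $o\in Z^{\mathrm{n}}$ one gets a countable intersection of opens on which $\hvol\geq\hvol(o)$, and then picking a sequence $o_i$ with $\hvol(o_i)\to M:=\sup_{x\in Z}\hvol(x,X,D)$ and intersecting the corresponding $U_{o_i}$ gives the very general locus where $\hvol=M$. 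You instead go one layer deeper, pulling out the constructibility and lower semi-continuity of the normalized colength functions $\widehat{\ell_{c,k}}$ established inside the proof of Theorem~\ref{weaksc}, and build the open sets $V_{c,k}$ directly from their maximum loci before passing to $\hvol$ via Theorem~\ref{hvolcolength}. This buys you a slightly more constructive description of $U$ and avoids the auxiliary sequence $o_i$, at the cost of re-running part of the proof of the main theorem; neither approach requires anything beyond what the paper already proves.
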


\begin{proof}
Denote by $Z^{\mathrm{n}}$ the normalization of $Z$. Then the proof follows quickly by applying Theorem \ref{mainthm} to $\mathrm{pr}_2:(X\times Z^{\mathrm{n}},D\times Z^{\mathrm{n}})\to Z^{\mathrm{n}}$ together with the natural diagonal section $\sigma:Z^{\mathrm{n}}\to X\times Z^{\mathrm{n}}$.
\end{proof}

Next we study the case when $X$ is a Gromov-Hausdorff limit
of K\"ahler-Einstein Fano manifolds. Note that the function $x\mapsto\hvol(x,X)=n^n\cdot\Theta(x,X)$ is lower semi-continuous with respect to the Euclidean topology on $X$ by \cite{ss17, lx17}. This result together with Theorem \ref{mainthm} provide strong evidence of the special case of  Conjecture \ref{mainconj} on the constructibility and lower semi-continuity of the function $x\mapsto\hvol(x,X)$ for a klt variety $X$.

The following theorem partially generalizes \cite[Lemma 3.3 and Proposition 3.10]{ss17}.
\begin{thm}\label{ghlimit}
Let $X$ be a Gromov-Hausdorff limit of K\"ahler-Einstein Fano manifolds. Let $x\in X$ be any closed point. Then for any finite quasi-\'etale morphism of singularities $\pi:(y\in Y)\to(x\in X)$, we have 
$\deg(\pi)\leq \Theta(x,X)^{-1}$. In particular, we have
\begin{enumerate}
    \item $|\hat{\pi}_1^{\mathrm{loc}}(X,x)|\leq
    \Theta(x,X)^{-1}$.
    \item For any $\bQ$-Cartier Weil divisor $L$ on $X$, we have
    $\mathrm{ind}(x,L)\leq\Theta(x,X)^{-1}$ where $\mathrm{ind}(x,L)$ denotes the
    Cartier index of $L$ at $x$.
\end{enumerate}
\end{thm}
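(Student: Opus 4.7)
The plan is to combine the universal upper bound $\hvol \le n^n$ from Theorem \ref{maxhvol} with the finite degree formula for normalized volumes under quasi-\'etale covers, which is unconditionally available in the Gromov--Hausdorff regime thanks to \cite{lx17}.

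First I would check that the target $(y \in Y)$ is itself a klt singularity. Since $\pi$ is \'etale in codimension one and both $X$ and $Y$ are normal, the adjunction $K_Y = \pi^\ast K_X$ holds on the common smooth locus and extends across the codimension $\ge 2$ singular set, so log discrepancies of valuations are preserved under $\pi$ and klt lifts from $(x \in X)$ to $(y \in Y)$. Applying Theorem \ref{maxhvol} to $(y \in Y)$ with zero boundary then yields the key inequality $\hvol(y, Y) \le n^n$.

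Second I would invoke the finite degree formula $\hvol(y, Y) = \deg(\pi) \cdot \hvol(x, X)$. While this equality is \cite[Conjecture 4.1]{liux17} in general, in the Gromov--Hausdorff setting it is unconditional: \cite{lx17} identifies $\hvol = n^n \Theta$ at $x$ and, by the local structure transferred from $X$ to $Y$, at $y$ as well, after which the classical multiplicativity $\Theta(y, Y) = \deg(\pi)\Theta(x, X)$ of the metric volume density under finite quasi-\'etale covers gives the formula. Combining with the first step,
\[
n^n \;\ge\; \hvol(y, Y) \;=\; \deg(\pi) \cdot \hvol(x, X) \;=\; \deg(\pi) \cdot n^n \Theta(x, X),
\]
which rearranges to the main inequality $\deg(\pi) \le \Theta(x, X)^{-1}$.

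For the two corollaries: (1) follows because $\hat{\pi}_1^{\mathrm{loc}}(X, x)$ is finite by \cite{xu14} and is realized as the Galois group of a finite quasi-\'etale Galois cover $\widetilde{X} \to X$ of degree $|\hat{\pi}_1^{\mathrm{loc}}(X, x)|$, to which the main bound applies; for (2), letting $m = \mathrm{ind}(x, L)$ and choosing a local trivialization of $\cO_X(mL)$ near $x$, the local index-one cover $Y = \Spec_X \bigoplus_{i=0}^{m-1} \cO_X(iL)$ is a finite quasi-\'etale cover of $X$ near $x$ of degree $m$, so the main bound gives $m \le \Theta(x, X)^{-1}$. The principal obstacle is justifying the finite degree formula used in the second step: its validity in our setting hinges on the identification $\hvol = n^n \Theta$ passing from the Gromov--Hausdorff limit $X$ to the quasi-\'etale cover $Y$ at $y$, which is an analytic input borrowed from \cite{lx17, ss17}.
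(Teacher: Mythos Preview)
Your proposal follows essentially the same route as the paper: combine $\hvol(y,Y)\le n^n$ from Theorem~\ref{maxhvol} with the finite degree formula $\hvol(y,Y)=\deg(\pi)\cdot\hvol(x,X)$ and the identification $\hvol(x,X)=n^n\Theta(x,X)$ to obtain $\deg(\pi)\le\Theta(x,X)^{-1}$.

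The one place where you diverge is in justifying the finite degree formula. You attempt to derive it by first transferring the identification $\hvol=n^n\Theta$ from $x$ to the cover point $y$ and then invoking metric multiplicativity of $\Theta$; you correctly flag this as the principal obstacle, since it is not obvious that $Y$ inherits the analytic structure needed for $\Theta(y,Y)$ to be defined and identified with $\hvol$. The paper sidesteps this entirely by citing \cite[Theorem~1.7]{lx17}, which establishes the finite degree formula directly for quasi-\'etale covers of Gromov--Hausdorff limits without needing to realize $Y$ itself as such a limit. So your self-identified gap is real, but it is closed by a single citation rather than by the argument you sketch. Your explicit derivations of the corollaries (1) and (2) via the universal \'etale cover and the index-one cover are correct and more detailed than the paper, which leaves them implicit.
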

\begin{proof}
 By \cite[Theorem 1.7]{lx17}, the finite degree formula holds for $\pi$, i.e.
 $\hvol(y,Y)=\deg(\pi)\cdot\hvol(x,X)$. Since $\hvol(y,Y)\leq n^n$ by 
 \cite[Theorem A.4]{liux17} or Theorem \ref{maxhvol} and $\hvol(x,X)
 =\Theta(x,X)$ by \cite[Corollary 5.7]{lx17}, we have
 $\deg(\pi)\leq n^n/\hvol(x,X)=\Theta(x,X)^{-1}$.
\end{proof}

\begin{rem}\label{r_localpi1}
If the finite degree formula \cite[Conjecture 4.1]{liux17} were
true for any klt singularity, then clearly $\deg(\pi)\leq n^n/\hvol(x,X)$ holds for any 
finite quasi-\'etale morphism $\pi: (y,Y)\to (x,X)$ between $n$-dimensional klt singularities. 
In particular, we would get an effective upper bound 
$|\hat{\pi}_1^{\mathrm{loc}}(X,x)|\leq n^n/\hvol(x,X)$ where
$\hat{\pi}_1^{\mathrm{loc}}(X,x)$ is known to be finite
by \cite{xu14, bgo17} (see \cite[Theorem 1.5]{liux17} for a
partial result in dimension $3$).
\end{rem}

\begin{thm}
Let $V$ be a K-semistable complex $\bQ$-Fano variety of dimension $(n-1)$. Let $q$ be the largest integer such that there exists a Weil divisor $L$ satisfying $-K_V\sim_{\bQ}qL$. Then 
\[
q\cdot(-K_V)^{n-1}\leq n^n.
\]
\end{thm}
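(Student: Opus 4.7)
The plan is to apply Theorem \ref{maxhvol} to the Seifert cone over $V$ polarized by $L$. Explicitly, set
\[
X:=\Spec\bigoplus_{m\geq 0}H^0(V,\cO_V(mL)),
\]
where $\cO_V(mL)$ denotes the divisorial (reflexive) sheaf associated with the Weil divisor $mL$, and let $x\in X$ be the vertex. Since $V$ is $\bQ$-Fano and the Weil divisor $L\sim_{\bQ}q^{-1}(-K_V)$ is ample $\bQ$-Cartier, the standard cone construction (\cite[\S 3.1]{kol13}) makes $(x\in X)$ into an $n$-dimensional klt singularity, so Theorem \ref{maxhvol} applied with $D=0$ gives
\[
\hvol(x,X)\leq n^n.
\]
Thus the proof of the theorem reduces to the identity $\hvol(x,X)=q\cdot(-K_V)^{n-1}$.

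To establish this identity, I would first compute $\hvol$ at the canonical Reeb valuation $v_{\mathrm{can}}\in\Val_{X,x}$, obtained as the order of vanishing along the exceptional divisor $E$ of the vertex blow-up. Since $E\cong V$ with $E|_E\sim -L$, adjunction together with $-K_V\sim_{\bQ}qL$ gives $A_X(v_{\mathrm{can}})=q$, and the graded-ring description of $X$ yields $\vol(v_{\mathrm{can}})=L^{n-1}=q^{-(n-1)}(-K_V)^{n-1}$. Hence
\[
\hvol(v_{\mathrm{can}})=A_X(v_{\mathrm{can}})^n\cdot\vol(v_{\mathrm{can}})=q\cdot(-K_V)^{n-1},
\]
which already produces the upper bound $\hvol(x,X)\leq q\cdot(-K_V)^{n-1}$. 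For the matching lower bound I would reduce to the Cartier setting of Theorem \ref{ksscone} via a Galois cover: choose $r\in\bN$ sufficiently divisible that $-rK_V$ is Cartier and $-rK_V\sim rqL$ (as an actual linear equivalence, not merely a $\bQ$-linear one), and set $X_0:=C(V,-rK_V)$. The inclusion of the $rq$-th Veronese subring then realizes $X_0$ as the $\mu_{rq}$-quotient of $X$ via a finite morphism $\pi\colon X\to X_0$ of degree $rq$ that is \'etale away from the vertices. Applying Theorem \ref{ksscone} to $X_0$ with parameter $r$ and the K-semistability of $V$ gives $\hvol(x_0,X_0)=r^{-1}(-K_V)^{n-1}$, and one checks via the explicit $\bG_m$-equivariant structure that the finite-degree relation $\hvol(x,X)=\deg(\pi)\cdot\hvol(x_0,X_0)$ holds for this cyclic quasi-\'etale cover, giving $\hvol(x,X)=rq\cdot r^{-1}(-K_V)^{n-1}=q\cdot(-K_V)^{n-1}$.

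The main obstacle is the matching lower bound $\hvol(x,X)\geq q\cdot(-K_V)^{n-1}$, since Theorem \ref{ksscone} is only formulated for cones with Cartier polarization of the form $-r(K_V+E)$. The Galois descent along $\pi\colon X\to X_0$ outlined above circumvents this issue by reducing to the Cartier case, at the cost of a (relatively routine, thanks to the explicit $\bG_m$-action) verification of the finite-degree formula for this specific cyclic cover. Alternatively, the valuation-theoretic proof of \cite[Proposition 4.6]{lx16} should extend verbatim to the Seifert setting, as the Cartier hypothesis is not essentially used there.
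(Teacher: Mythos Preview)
Your overall strategy---form the orbifold cone $X=C(V,L)$, compute the normalized volume of the canonical valuation $v_{\mathrm{can}}=\ord_{V_0}$ along the exceptional divisor of the vertex blow-up to be $q(-K_V)^{n-1}$, and then invoke the bound $\hvol(x,X)\leq n^n$ from Theorem~\ref{maxhvol}---is exactly the paper's. The computations $A_X(\ord_{V_0})=q$ and $\vol(\ord_{V_0})=(L^{n-1})$ also match. The only difference is in how one shows that $\ord_{V_0}$ actually \emph{minimizes} $\hvol_{x,X}$.

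The paper does this in one stroke: by \cite{kol04} the exceptional divisor $V_0$ is a Koll\'ar component over $x\in X$ with $(V_0,0)\cong(V,0)$ K-semistable, so \cite[Theorem~A]{lx16} applies directly to give $\hvol(x,X)=\hvol(\ord_{V_0})$. No Cartier hypothesis is needed here; the Koll\'ar-component criterion of \cite{lx16} holds for arbitrary klt singularities. Your ``alternative'' suggestion---that the argument behind \cite[Proposition~4.6]{lx16} extends to the Seifert setting---is in effect precisely this, and is correct.

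Your primary route, by contrast, passes through the cyclic quotient $\pi:X\to X_0=X/\mu_{rq}$ and then needs $\hvol(x,X)=\deg(\pi)\cdot\hvol(x_0,X_0)$. The direction you require, $\hvol(x,X)\geq\deg(\pi)\cdot\hvol(x_0,X_0)$, is the \emph{hard} direction of the finite degree formula \cite[Conjecture~4.1]{liux17}, which at the time of this paper was open in general and only established in special cases such as \cite[Theorem~1.7]{lx17}. Calling its verification for this cover ``relatively routine'' understates the work: one must descend an arbitrary $\hvol$-minimizing valuation (or ideal) on $X$ to $X_0$ while controlling both log discrepancy and volume, which is essentially proving a case of the conjecture. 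So the Galois-cover argument, as written, is not a gap-free substitute for the direct Koll\'ar-component argument the paper uses; you should simply take your alternative as the main route.
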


\begin{proof}
 Consider the orbifold cone $X:=C(V,L)=\Spec(\oplus_{m\geq 0}H^0(V,
 \cO_V(\lfloor m L\rfloor))$ with the cone vertex $x\in X$.
 Let $\tilde{X}:=\Spec_V\oplus_{m\geq 0}\cO_V(\lfloor mL\rfloor)$ 
 be the partial resolution of $X$ with exceptional divisor $V_0$.
 Then by \cite[40-42]{kol04}, $x\in X$ is a klt singularity, and $(V_0,0)\cong (V,0)$
 is a K-semistable Koll\'ar component over $x\in X$. Hence \cite[Theorem A]{lx16} implies
 that $\ord_{V_0}$ minimizes $\hvol_{x,X}$. By \cite[40-42]{kol04}
 we have $A_X(\ord_{V_0})=q$, $\vol(\ord_{V_0})=(L^{n-1})$. Hence
 $$\hvol(x,X)=A_X(\ord_{V_0})^n\vol(\ord_{V_0})=q^n(L^{n-1})=q(-K_V)^{n-1},$$
 and the proof is finished since $\hvol(x,X)\leq n^n$ by \cite[Theorem A.4]{liux17}
 or Theorem \ref{maxhvol}.
 \end{proof}

\appendix
\section{Asymptotic lattice points counting in convex bodies}
\label{app}

In this appendix, we will prove the following proposition.

\begin{prop}\label{convexgeo}
For any positive number $\epsilon$, there exists
$k_0=k_0(\epsilon, n)$ such that for any closed convex body
$\Delta\subset [0,1]^n$ and any integer $k\geq k_0$, we have
\begin{equation}\label{ineq_convexgeo}
\left|\frac{\#(k\Delta\cap\bZ^n)}{k^n}-\vol(\Delta)\right|\leq\epsilon.
\end{equation}
\end{prop}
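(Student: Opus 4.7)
The plan is to establish the stronger bound
\[
\bigl|\#(k\Delta \cap \bZ^n) - k^n \vol(\Delta)\bigr| \leq C_n\, k^{n-1}
\]
with $C_n$ depending only on $n$; dividing by $k^n$ yields an error of $O_n(1/k)$, so taking $k_0 := \lceil C_n/\epsilon \rceil$ finishes the proof. Uniformity in $\Delta$ is enforced by the containment $k\Delta \subset [0,k]^n$, which lets a single dimensional constant serve every convex body $\Delta \subset [0,1]^n$.

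First I would prove the classical discrepancy estimate
\[
\bigl|\#(K \cap \bZ^n) - \vol(K)\bigr| \leq c_n\, S(K)
\]
for every convex body $K \subset \bR^n$, where $S(K)$ is its surface measure. The argument is a cube-covering: to each $z \in K \cap \bZ^n$ attach the unit cube $C_z := z + [0,1]^n$. These cubes are pairwise disjoint, so $\#(K \cap \bZ^n) = \vol\bigl(\bigcup_z C_z\bigr)$. Any $x \in K$ with $d(x,\partial K) > \sqrt{n}$ lies in $C_z$ for $z$ the componentwise floor of $x$, since then $|x-z| \leq \sqrt{n}$ forces $z \in K$; and conversely any $y \in C_z \setminus K$ satisfies $d(y,\partial K) \leq |y-z| \leq \sqrt{n}$. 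Hence the symmetric difference $K \mathbin{\triangle} \bigcup_z C_z$ lies in the boundary tube of width $\sqrt{n}$, whose volume is controlled by Steiner's formula $\vol(K + tB) - \vol(K \ominus tB) = \sum_{i \geq 1} a_{n,i}\, W_i(K)\, t^i$ in terms of the quermassintegrals $W_i(K)$. Applying this with $K = k\Delta$ and invoking monotonicity of surface area for convex bodies under inclusion, $S(k\Delta) \leq S([0,k]^n) = 2n k^{n-1}$ (equivalently by Cauchy's projection formula), yields the claimed bound.

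The main technical point, though mild, is ensuring the dimensional constant in the shell estimate really depends only on $n$ and not on $\Delta$; this is automatic from the monotonicity $W_i(k\Delta) \leq W_i([0,k]^n) = O_n(k^{n-i})$ valid for convex bodies under containment, since then every term of the Steiner expansion at $t = \sqrt{n}$ is $O_n(k^{n-1})$. A small subtlety arises when $\Delta$ is lower-dimensional (so $\vol(\Delta) = 0$): the cube-covering argument still applies, and the ``surface area'' of $k\Delta$ in $\bR^n$ is bounded by twice its $(n-1)$-dimensional measure, which remains $O_n(k^{n-1})$ because $k\Delta \subset [0,k]^n$. Combining the two steps gives $\bigl|\#(k\Delta \cap \bZ^n) - k^n \vol(\Delta)\bigr| = O_n(k^{n-1})$, and the proposition follows.
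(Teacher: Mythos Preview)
Your argument is correct and in fact yields the sharper quantitative bound $\bigl|\#(k\Delta\cap\bZ^n)-k^n\vol(\Delta)\bigr|\le C_n k^{n-1}$, which immediately gives the proposition with $k_0=\lceil C_n/\epsilon\rceil$. One minor imprecision: the identity you write for the boundary tube, $\vol(K+tB)-\vol(K\ominus tB)=\sum_{i\ge 1}a_{n,i}W_i(K)t^i$, is not literally Steiner's formula, since the inner parallel body has no clean Steiner expansion. But the intended estimate goes through: the outer shell $\vol(K+tB)-\vol(K)$ is the genuine Steiner polynomial and each $W_i(k\Delta)\le W_i([0,k]^n)=O_n(k^{n-i})$ by monotonicity of mixed volumes, while the inner shell satisfies $\vol(K)-\vol(K\ominus tB)=\int_0^t S(K\ominus sB)\,ds\le t\,S(K)$ by monotonicity of surface area. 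Both pieces are $O_n(k^{n-1})$ at $t=\sqrt{n}$, and the degenerate case is absorbed by the same monotonicity (since $W_0(k\Delta)=0$ there).

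The paper takes a genuinely different route: induction on $n$ by slicing along the last coordinate. By Brunn--Minkowski the function $t\mapsto\vol(\Delta_t)$ is unimodal on the projection interval, so an elementary lemma (Proposition~\ref{integration}) shows its Riemann sum at mesh $1/k$ approximates the integral to within $O(1/k)$; combined with the inductive hypothesis on each slice, this gives the result. That argument is more self-contained, invoking Brunn--Minkowski only qualitatively and avoiding quermassintegrals altogether. Your approach is shorter once one is willing to quote the classical monotonicity of intrinsic volumes, and it makes the $O_n(1/k)$ rate explicit from the outset rather than emerging through an $\epsilon/3$ induction.
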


\begin{proof}
We do induction on dimensions. If $n=1$, then $k\Delta$ is a closed
interval of length $k\vol(\Delta)$, hence we know
\[
 k\vol(\Delta)-1\leq\#(k\Delta\cap\bZ)\leq k\vol(\Delta)+1.
\]
So \eqref{ineq_convexgeo} holds for $k_0=\lceil 1/\epsilon\rceil$.

Next, assume that the proposition is true for dimension $n-1$.
Denote by $(x_1,\cdots,x_n)$ the
coordinates of $\bR^n$. Let $\Delta_t:=\Delta\cap\{x_n=t\}$ be
the sectional convex body in $[0,1]^{n-1}$. Let $[t_{-},t_{+}]$ be
the image of $\Delta$ under the projection onto the last coordinate.
Then we know that $\vol(\Delta)=\int_{t_{-}}^{t_+}\vol(\Delta_t)dt$.
By induction hypothesis, there exists $k_1\in \bN$ such that $$
\vol(\Delta_t)-\frac{\epsilon}{3}\leq \frac{\#(k\Delta_t\cap\bZ^{n-1})}{k^{n-1}}
\leq \vol(\Delta_t)+\frac{\epsilon}{3} \quad \textrm{for any }k\geq k_1.
$$
It is clear that
$$\#(k\Delta\cap\bZ^n)=\sum_{t\in[t_-,t_+]\cap\frac{1}{k}\bZ}\#(k\Delta_t\cap\bZ^{n-1}),$$
so for any $k\geq k_1$ we have
\begin{equation}\label{ineq_cg1}
 \left|\#(k\Delta\cap\bZ^n) -k^{n-1}\cdot \sum_{t\in[t_-,t_+]\cap\frac{1}{k}\bZ}\vol(\Delta_t)\right|
 \leq \frac{\epsilon}{3} k^{n-1}\cdot\#([t_-,t_+]\cap\frac{1}{k}\bZ)\leq \frac{2\epsilon}{3} k^n.
\end{equation}

Next, we know that the function $t\mapsto\vol(\Delta_t)^{1/{(n-1)}}$ is concave on $[t_-,t_+]$ by the Brunn-Minkowski theorem.
In particular, we can find $t_0\in [t_-,t_+]$ such that $g(t):=\vol(\Delta_t)$ reaches 
its maximum at $t=t_0$. Hence $g$ is increasing on $[t_-,t_0]$
and decreasing on $[t_0,t_+]$.
Then applying Proposition \ref{integration} to $g|_{[t_-,t_0]}$
and $g|_{[t_0,t_+]}$ respectively yields
\begin{align*}
 \left|\int_{t_-}^{t_0}\vol(\Delta_t)dt-\frac{1}{k}\sum_{t\in[t_-,t_0]\cap\frac{1}{k}\bZ}\vol(\Delta_t)\right|
\leq \frac{2}{k},\\
\left|\int_{t_0}^{t_+}\vol(\Delta_t)dt-\frac{1}{k}\sum_{t\in[t_0,t_+]\cap\frac{1}{k}\bZ}\vol(\Delta_t)\right|
\leq \frac{2}{k}.
\end{align*}
Since $0\leq \vol(\Delta_{t_0})\leq 1$, we have
\begin{equation}\label{ineq_cg2}
 \left|\int_{t_-}^{t_+}\vol(\Delta_t)dt-\frac{1}{k}\sum_{t\in[t_+,t_-]\cap\frac{1}{k}\bZ}\vol(\Delta_t)\right|
 \leq \frac{5}{k}.
\end{equation}
Therefore, by setting $k_0=\max(k_1,\lceil 15/\epsilon\rceil)$, the 
inequality \eqref{ineq_convexgeo}
follows easily by combining \eqref{ineq_cg1} and \eqref{ineq_cg2}.
\end{proof}

\begin{prop}\label{integration}
 For any monotonic function $g:[a,b]\to [0,1]$ and any $k\in\bN$, we have
 \[
  \left|\int_{a}^b g(s) ds-\frac{1}{k}\sum_{t\in[a,b]\cap\frac{1}{k}\bZ}g(t)\right|\leq\frac{2}{k}.
 \]
\end{prop}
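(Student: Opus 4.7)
The plan is to reduce to the increasing case and compare the sum to the integral via the usual Riemann sum sandwich, being careful about the boundary intervals of length less than $1/k$.

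First, I would note that without loss of generality $g$ is nondecreasing (the decreasing case follows by applying the result to $1-g$, or by a symmetric argument). Let $t_0<t_1<\cdots<t_N$ be the points of $[a,b]\cap\frac{1}{k}\bZ$, so $t_{i+1}-t_i=\frac{1}{k}$, $0\leq t_0-a<\frac{1}{k}$ and $0\leq b-t_N<\frac{1}{k}$. I would split
\[
\int_a^b g(s)\,ds=\int_a^{t_0}g(s)\,ds+\int_{t_0}^{t_N}g(s)\,ds+\int_{t_N}^{b}g(s)\,ds.
\]

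Since $0\leq g\leq 1$, the two boundary integrals each lie in $[0,\frac{1}{k}]$. For the middle piece, monotonicity of $g$ gives the standard Riemann sum bounds
\[
\frac{1}{k}\sum_{i=0}^{N-1}g(t_i)\;\leq\;\int_{t_0}^{t_N}g(s)\,ds\;\leq\;\frac{1}{k}\sum_{i=1}^{N}g(t_i).
\]
Subtracting $\frac{1}{k}\sum_{i=0}^{N}g(t_i)$ from this chain shows that
\[
\int_{t_0}^{t_N}g(s)\,ds-\frac{1}{k}\sum_{i=0}^{N}g(t_i)\in\Bigl[-\tfrac{1}{k}g(t_N),\;-\tfrac{1}{k}g(t_0)\Bigr]\subset\Bigl[-\tfrac{1}{k},0\Bigr].
\]

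Combining, the total difference $\int_a^b g-\frac{1}{k}\sum_{t}g(t)$ lies in $[-\frac{1}{k},\frac{2}{k}]$, giving the claimed bound $\frac{2}{k}$. There is no real obstacle here; the only mild care needed is to track the contributions of the two leftover boundary intervals (whose existence accounts for the constant $2$ rather than $1$) and to notice that the sum in the statement includes both endpoints $g(t_0)$ and $g(t_N)$, which makes the middle-term discrepancy one-sided.
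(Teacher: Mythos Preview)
Your proof is correct and follows essentially the same approach as the paper's: reduce to the nondecreasing case, isolate the lattice points $t_0=a_k,\ldots,t_N=b_k$ in $[a,b]$, sandwich $\int_{t_0}^{t_N}g$ between the left and right Riemann sums, and then bound the two boundary integrals over $[a,t_0]$ and $[t_N,b]$ by $1/k$ each using $0\le g\le 1$. Both arguments arrive at the same asymmetric range $\int_a^b g-\tfrac{1}{k}\sum g(t)\in[-\tfrac{1}{k},\tfrac{2}{k}]$; the only (harmless) omission in your write-up is the trivial case $[a,b]\cap\tfrac{1}{k}\bZ=\emptyset$, where the estimate is immediate since then $b-a<1/k$.
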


\begin{proof}
 We may assume that $g$ is an increasing function.
 Denote $a_k:=\frac{\lceil ka\rceil}{k}$ and $b_k:=\frac{\lfloor kb\rfloor}{k}$,
 so $[a,b]\cap\frac{1}{k}\bZ=[a_k,b_k]\cap\frac{1}{k}\bZ$.
 Since $\int_{t-1/k}^{t} g(s)ds\leq g(t)/k$ whenever $t\in [a_k+1/k,b_k]$,
 we have 
 \[
  \int_{a_k}^{b_k}g(s)ds \leq \frac{1}{k}\sum_{t\in[a_k+1/k,b_k]\cap\frac{1}{k}\bZ}g(t)
  \leq \frac{1}{k}\sum_{t\in[a,b]\cap\frac{1}{k}\bZ}g(t),
 \]
Similarly, $\int_{t}^{t+1/k} g(s)ds\geq g(t)/k$ for any $t\in [a_k, 
b_k-1/k]$, we have
 \[
  \int_{a_k}^{b_k}g(s)ds\geq\frac{1}{k}\sum_{t\in[a_k,b_k-1/k]
  \cap\frac{1}{k}\bZ}g(t)\geq\frac{1}{k}\sum_{t\in[a,b]
  \cap\frac{1}{k}\bZ}g(t)-\frac{1}{k}.
 \]
 It is clear that $a_k\in[a, a+1/k]$ and $b_k\in[b-1/k, b]$,
 so we have
 \[
  \int_{a_k}^{b_k}g(s)ds\geq \int_a^bg(s)ds-\frac{2}{k},\qquad
  \int_{a_k}^{b_k}g(s)ds\leq \int_a^bg(s)ds.
 \]
 As a result, we have
 \[
 \frac{1}{k}\sum_{t\in[a,b]  \cap\frac{1}{k}\bZ}g(t)-\frac{1}
 {k}\leq \int_{a}^b g(s)ds\leq \frac{1}{k}\sum_{t\in[a,b]
  \cap\frac{1}{k}\bZ}g(t)+\frac{2}{k}
 \]
\end{proof}

\end{document}